\def\cal{\mathcal}
\def\L{{\cal L}}
\newcommand{\C}{\mathbb{C}}
\newcommand{\R}{\mathbb{R}}
\newcommand{\NN}{\mathbb{N}}
\newcommand{\Z}{\mathbb{Z}}
\renewcommand{\to}{\longrightarrow}
\newtheorem{Thm}{Theorem}[section]		
\newtheorem{Lemma}[Thm]{Lemma}
\newtheorem{Conj}[Thm]{Conjecture}
\theoremstyle{definition}
\newtheorem*{defn}{Definition}
\theoremstyle{remark}
\newtheorem*{rmk}{Remark}
\newtheorem{ind}[]{{\rm\it Indice}}
\title[On a new class of Laguerre-P\'{o}lya type functions]{On a new class of Laguerre-P\'{o}lya type functions with applications in number theory}
\author[Wagner]{Ian Wagner}
\email{iwagne11@uni-koeln.de}
\begin{document}
\numberwithin{equation}{section}

\maketitle

\begin{abstract}
We define a new class of functions, connected to the classical Laguerre-P\'{o}lya class, which we call the shifted Laguerre-P\'{o}lya class.  Recent work of Griffin, Ono, Rolen, and Zagier shows that the Riemann Xi function is in this class.  We prove that a function being in this class is equivalent to its Taylor coefficients, once shifted, being a degree $d$ multiplier sequence for every $d$, which is equivalent to its shifted coefficients satisfying all of the higher Tur\'{a}n inequalities.  This mirrors a classical result of P\'{o}lya and Schur.  For each function in this class we show some order derivative satisfies each extended Laguerre inequality.  Finally, we discuss some old and new conjectures about iterated inequalities for functions in this class. 
\end{abstract}

\section{Introduction and results}
A partition of a non-negative integer $n$ is a non-increasing of positive integers that sum to $n$.  The partition function $p(n)$ counts the number of partitions of $n$.  In the 1970's Nicolas \cite{N} used asymptotics to prove that $p(n)$ is log-concave for $n>25$, that is
\begin{equation*}
p(n)^2 \geq p(n-1) p(n+1), \qquad n>25.
\end{equation*}
In \cite{RS1} Stanley surveyed several surprising results on the log-concavity of sequences arising from algebra, combinatorics, and geometry.  There has been a recent resurgence of studying the log-concavity or eventual log-concavity of sequences along the lines of Stanley.  In particular, in \cite{DP} the log-concavity of the partition function was reproved, which inspired Chen, Jia, and Wang \cite{CJW} to study higher order analogues of these inequalities which in turn has led to numerous generalizations and results pertaining to inequalities for combinatorial sequences (see for example \cite{HN}, \cite{DM}, \cite{JW}).  The aim of this work is to put these results into a general framework and, in some instances, give extensions of the discussed results.

A real entire function is said to be in the Laguerre-P\'{o}lya class if it is the uniform limit on compact subsets of $\C$ of a sequence of real polynomials with all real roots.  Further, a function in the Laguerre-P\'{o}lya class is said to be of type I if it is the limit of a sequence of polynomials with all real and non-positive roots \cite{PS}.  Equivalently, functions in this class can be given by the following definition.
\begin{defn}
A real entire function $\psi(x) := \sum_{k \geq 0} \frac{\gamma_{k}}{k!} x^k$ is said to belong to the {\bf{Laguerre-P\'{o}lya class}}, denoted by $\mathcal{L\text{-}P}$, if it can be represented as
\begin{equation*}
\psi(x) = Cx^m e^{bx-ax^2} \prod_{k=1}^{r} \left(1 + \frac{x}{x_{k}} \right)e^{-\frac{x}{x_{k}}}, \qquad 0 \leq r \leq \infty
\end{equation*}
where $b, C, x_{k} \in \R$, $m \in \Z_{\geq 0}$, $a \geq 0$, and $\sum_{k=1}^{r} x_{k}^{-2} < \infty$.  If, for a real entire function $\psi(x) \in \mathcal{L\text{-}P}$, either $\psi(x)$ or $\psi(-x)$ can be represented in the form
\begin{equation*}
\psi(x) = C x^m e^{\sigma x} \prod_{k=1}^{r} \left( 1 + \frac{x}{x_{k}} \right), \qquad 0 \leq r \leq \infty
\end{equation*}
with $C \in \R$, $m \in \Z_{\geq 0}$,  $\sigma \geq 0$, $x_{k} >0$, and $\sum x_{k}^{-1} < \infty$, then we say $\psi \in \mathcal{L\text{-}P}I$ is of {\bf{type I}} in the Laguerre-P\'{o}lya class.  Finally, if $\gamma_{k} \geq 0$ for all $k \geq 0$ for a function $\psi \in \mathcal{L\text{-}P}I$ then we say $\psi \in \mathcal{L\text{-}P}^{+}$.
\end{defn}
In this work we will focus on a generalization of the Laguerre-P\'{o}lya class of type I, but all of the results can be adjusted to fit the full Laguerre-P\'{o}lya class.  In \cite{P} P\'{o}lya proved that the Riemann hypothesis is equivalent to the Riemann Xi-function
\begin{equation*}
\Xi(z) := \frac{1}{2} \left(-z^2 - \frac{1}{4} \right) \pi^{\frac{iz}{2} - \frac{1}{4}} \Gamma \left(- \frac{iz}{2} + \frac{1}{4} \right) \zeta \left( -iz + \frac{1}{2} \right)
\end{equation*}
being in the Laguerre-P\'{o}lya class.

We will now introduce two topics related to this class of functions.  Given a sequence of real numbers $\{\gamma_{k} \}_{k \geq 0}$ we can define the linear operator $\Gamma_{\gamma} \in \L(\R[[x]])$ by $\Gamma_{\gamma}(x^k) = \gamma_{k} x^k$.
\begin{defn}
A sequence of real numbers $\{\gamma_{k} \}_{k \geq 0}$ is called a {\bf{multiplier sequence of type I}} if $\Gamma_{\gamma}(p(x))$ has only real zeros whenever the real polynomial $p(x)$ has only real zeros and is a {\bf{multiplier sequence of type II}} if $\Gamma_{\gamma}(p(x))$ has only real zeros whenever $p(x)$ has only real zeros of the same sign.
\end{defn}
To avoid any confusion we can extend any finite sequence $\{a_{k}\}_{k=0}^m$ to an infinite one $\{a_k\}_{k \geq 0}$ by just defining $a_{k} =0$ for $k>m$.  Multiplier sequences satisfy several nice properties.  In particular, if $\{\gamma_{k}\}_{k \geq 0}$ is a multiplier sequence, then so is $\{\gamma_{k+n}\}_{k \geq 0}$ for any nonnegative integer $n$, and with the Hadamard product as the operation the set of all multiplier sequences forms a monoid.  That is, if $\{\gamma_{k} \}_{k \geq 0}$ and $\{\lambda_k \}_{k \geq 0}$ are multiplier sequences, then $\{\gamma_{k} \}_{k \geq 0} * \{\lambda_k \}_{k \geq 0} = \{\gamma_{k} \lambda_{k} \}_{k \geq 0}$ is as well.  Given a sequence of real numbers $\gamma = \{\gamma_{k}\}_{k \geq 0}$ we define the {\bf{Jensen polynomial of degree $d$ and shift $n$ of $\gamma$}} by   
\begin{equation} \label{J}
J_{\gamma}^{d,n}(x) := \Gamma_{\{\gamma_{k+n}\}}((1+x)^d) = \sum_{k=0}^{d} \binom{d}{k} \gamma_{k+n} x^k.
\end{equation}

In \cite{PS} P\'{o}lya and Schur showed that if $\{\gamma_{k}\}_{k \geq 0}$ is a sequence of nonnegative real numbers, then the following are equivalent:
\begin{enumerate}
\item $\{\gamma_{k} \}_{k \geq 0}$ is a multiplier sequence.
\item For each $d$, the polynomial $J_{\gamma}^{d,0}(x)$ has all real non-positive roots; that is $J_{\gamma}^{d,0}(x) \in \mathcal{L\text{-}P}I$.
\item The formal power series $\phi(x) = \sum_{k=0}^{\infty} \frac{\gamma_{k}}{k!}x^k$ defines a function in the Laguerre-P\'{o}lya class of type I; that is $\Gamma_{\gamma}(e^x) \in \mathcal{L\text{-}P}I$.
\end{enumerate}

Note that we can essentially just work with $\psi \in \mathcal{L\text{-}P}^{+}$ since $\{-1\}_{k \geq 0}$, $\{(-1)^k \}_{k \geq 0}$, and $\{(-1)^{k+1} \}_{k \geq 0}$ are all multiplier sequences and it is known that for a multiplier sequence $\{\gamma_{k} \}_{k \geq 0}$ we have ${\rm{sign}}(\gamma_{k-1}) = {\rm{sign}}(\gamma_{k+1})$ \cite{PS}.

Recently, Griffin, Ono, Rolen, and Zagier \cite{GORZ} proved that if a sequence of real numbers $\{\gamma_{k}\}_{k \geq 0}$ satisfy certain conditions, then $\lim_{n \to \infty} \widetilde{J}_{\gamma}^{d,n}(x) = H_{d}(x)$, where $\widetilde{J}_{\gamma}^{d,n}(x)$ is a certain renormalization of the Jensen polynomials and $H_{d}(x)$ is the degree $d$ Hermite polynomial.  In particular, this shows that for these sequences, for each $d$ there exists an $N(d)$ such that $J_{\gamma}^{d,n}(x)$ has all real roots for each $n \geq N(d)$.  In \cite{GORTTW} Griffin, Ono, Rolen, Thorner, Tripp, and the author made this result effective for the Riemann Xi function.  It is natural to ask how such sequences fit into the theory of multiplier sequences and the Laguerre-P\'{o}lya class.  With this as motivation we make the following definitions, where for a polynomial $\phi_{n}(x)$ we denote the degree by $d_{n} = \deg(\phi_n)$.

\begin{defn}
A real entire function $\phi(x)$ is said to belong to the {\bf{shifted Laguerre-P\'{o}lya class of degree $d$}}, denoted $\mathcal{SL\text{-}P}(d)$, if it is the uniform limit of polynomials $\{\phi_{k}\}_{k \geq 0}$ with the property that there exists an $N(d)$ such that $\phi_{n}^{(d_n -d)}(x)$ has all real roots for any $n \geq N(d)$.  We say $\phi \in \mathcal{SL\text{-}P}(d)$ is of {\bf{type I}} and write $\phi \in \mathcal{SL\text{-}P}I(d)$ if all of the roots of $\phi_{n}^{(d_n -d)}(x)$ have the same sign.

A real entire function $\phi(x) = \sum_{k \geq 0} \frac{\gamma_{k}}{k!}x^k$ is said to belong to the {\bf{shifted Laguerre-P\'{o}lya class}}, denoted by $\mathcal{SL\text{-}P}$, if $\phi \in \mathcal{SL\text{-}P}(d)$ for every $d \in \NN$.  That is, it is the uniform limit on compact subsets of $\C$ of a sequence of real polynomials $\{\phi_{k}(x)\}_{k \geq 0}$ such that for each $d \in \NN$, there exists an $N(d)$ such that $\phi_{n}^{(d_n-d)}(x)$ has all real roots for any $n \geq N(d)$.  This implies that $\phi(x)$ has order at most $2$.  
If, for an entire real function $\phi \in \mathcal{SL\text{-}P}$ the sequence of polynomials has the property that $\phi_{n}^{(d_n -d)}(x)$ has all real roots of the same sign for any $n \geq N(d)$ then we say it is of {\bf type I} and write $\phi \in \mathcal{SL\text{-}P}I$.  This implies that $\phi(x)$ has order at most $1$.  Finally, if $\gamma_{k} \geq 0$ for $k$ large enough for a function $\phi \in \mathcal{SL\text{-}P}I$, then we write $\phi \in \mathcal{SL\text{-}P}^{+}$.
\end{defn}
\begin{rmk}
It is clear that $\mathcal{SL\text{-}P}(d) \subset \mathcal{SL\text{-}P}(d-1)$.  Further, if $\phi \in \mathcal{L\text{-}P}$, then $\phi \in \mathcal{SL\text{-}P}(d)$ for all $d \leq \deg(\phi)$ (or $d$ is any nonnegative integer if $\phi$ is transcendental) and one can take $N(d) = 0$ so this should be viewed as the shift zero case.
\end{rmk}
\begin{rmk}
It is shown in the proof of Theorem \ref{SLP} that function $\phi(x) \in \mathcal{SL\text{-}P}I$ has Weierstrass factorization
\begin{equation*} 
\phi(x) = C x^m e^{\sigma x} \prod_{k=1}^{r} \left( 1 + \frac{x}{x_{k}} \right), \qquad 0 \leq r \leq \infty
\end{equation*}
with $C \in \R, m \in \Z_{\geq 0}, \sigma \geq 0$, and $\sum |x_k|^{-1} < \infty$
or
\begin{equation*}
\phi(x) = C x^m e^{\sigma x} \prod_{k=1}^{r} \left( 1 + \frac{x}{x_{k}} \right)e^{-\frac{x}{x_k}}, \qquad 0 \leq r \leq \infty
\end{equation*}
with $\sum|x_{k}|^{-2} < \infty$.
\end{rmk}
\begin{rmk}
It is clear from the definition that the Laguerre-P\'{o}lya class is closed under differentiation.  Further, the P\'{o}lya-Wiman conjecture, proved by Craven, Csordas, and Smith \cite{CCS}, states that if $\phi(x)$ is a real entire function of order less than $2$ with a finite number of non-real zeros, then there exists an $N$ such that $\phi^{(m)}(x) \in \mathcal{L\text{-}P}$ for all $m \geq N$.  Kim \cite{K} extended this result to all $\phi \in \mathcal{L\text{-}P}^{*}$, where $\mathcal{L\text{-}P}^{*}$ is the class of all $\phi(x)= P(x)h(x)$ where $P$ is a real polynomial and $h \in \mathcal{L\text{-}P}$.  For such a function this implies $J_{\gamma}^{d,m}(x)$ has all real roots and so by Lemma \ref{converge} below one can take the associated Jensen polynomials as the sequence of polynomials to converge to $\phi(x)$.  In this context we have the following inclusions for transcendental functions
\begin{equation*}
\mathcal{L\text{-}P} \subseteq \mathcal{L\text{-}P}^* \subseteq \mathcal{SL\text{-}P}.
\end{equation*}
\end{rmk}

\begin{defn}
Given a nonnegative integer $d$, a sequence of real numbers $\{\gamma_{k}\}_{k \geq 0}$ is called an {\bf{order $d$ multiplier sequence of type I}} if, for each $n \in \NN$, $\Gamma_{\{\gamma_{k+n}\}}(p(x))$ has only real zeros whenever $p(x)$ has only real zeros and $\deg(p(x)) \leq d$ and an {\bf{order $d$ multiplier sequence of type II}} if, for each $n \in \NN$, $\Gamma_{\{\gamma_{k+n}\}}(p(x))$ has only real zeros whenever $p(x)$ has only real zeros of the same sign and $\deg(p(x)) \leq d$.

A sequence of real numbers $\{\gamma_{k}\}_{k \geq 0}$ is called a {\bf{shifted multiplier sequence of type I}}  (resp. {\bf{type II)}} if for each $d \in \NN$, there exists an $N(d)$ such that $\{\gamma_{k+n}\}_{k \geq 0}$ is an order $d$ multiplier sequence of type I  (resp. type II) for all $n \geq N(d)$.
\end{defn}
We can now state our main theorem.
\begin{Thm} \label{SLP}
Let $\{\gamma_{k}\}_{k \geq 0}$ be a sequence of eventually nonnegative real numbers.  Then the following are equivalent.
\begin{enumerate}
\item $\{\gamma_{k}\}_{k \geq 0}$ is a shifted multiplier sequence of type I.
\item For each $d \in \NN$, there exists an $N_{2}(d)$ such that $J_{\gamma}^{d,n}(x)$ has all real non-positive roots for all $n \geq N_{2}(d)$; that is $J_{\gamma}^{d+n,0}(x) \in \mathcal{SL\text{-}P}I(d)$ for all $n \geq N_{2}(d)$.
\item The formal power series $\phi(x) = \sum_{k =0}^{\infty} \frac{\gamma_{k}}{k!} x^k$ defines a function in the shifted Laguerre-P\'{o}lya class of type I; that is $\Gamma_{\gamma}(e^x) \in \mathcal{SL\text{-}P}I$.
\end{enumerate}
\end{Thm}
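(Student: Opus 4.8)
The plan is to establish the two equivalences $(1)\Leftrightarrow(2)$ and $(2)\Leftrightarrow(3)$, which together give the cycle. Throughout I would reduce to the eventually\nobreakspace positive situation (using that $\{(-1)^k\}_{k\ge0}$ and $\{(-1)^{k+1}\}_{k\ge0}$ are multiplier sequences together with the sign rule $\mathrm{sign}(\gamma_{k-1})=\mathrm{sign}(\gamma_{k+1})$), so that for the relevant large shifts ``all real roots of the same sign'' and ``all real non-positive roots'' coincide. The one external tool I would invoke is the finite-degree analogue of the P\'olya--Schur criterion: a nonnegative sequence $\{\beta_k\}$ preserves real-rootedness on all polynomials of degree at most $d$ (with all shifts) precisely when $\Gamma_{\{\beta_{k+s}\}}\bigl((1+x)^{d'}\bigr)$ has only real non-positive roots for every $d'\le d$ and every shift $s$.

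For $(1)\Leftrightarrow(2)$ I would first unwind the definition: a shifted multiplier sequence is one for which, for each $d$, there is a threshold $N(d)$ so that for every shift $\ell\ge N(d)$ the operator $\Gamma_{\{\gamma_{k+\ell}\}}$ preserves real-rootedness on all polynomials of degree $\le d$. The implication $(1)\Rightarrow(2)$ is then immediate: feeding the real-rooted polynomial $(1+x)^d$ into $\Gamma_{\{\gamma_{k+\ell}\}}$ produces exactly $J_\gamma^{d,\ell}(x)$, which is therefore real-rooted, and its roots are non-positive once $\ell$ is large enough that the coefficients $\binom{d}{k}\gamma_{k+\ell}$ are nonnegative; one sets $N_2(d)=\max(N(d),k_0)$ where $\gamma_k\ge0$ for $k\ge k_0$. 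For $(2)\Rightarrow(1)$ the task is to upgrade from the single test polynomial $(1+x)^d$ to every degree $\le d$ input. Given $d$, put $N(d)=\max_{d'\le d}N_2(d')$; then for every $\ell\ge N(d)$ and every $d'\le d$ the polynomial $J_\gamma^{d',\ell}$ has only real non-positive roots, and the finite-degree P\'olya--Schur criterion converts this into the statement that $\Gamma_{\{\gamma_{k+\ell}\}}$ preserves real-rootedness on all polynomials of degree $\le d$, which is exactly $(1)$.

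For $(2)\Leftrightarrow(3)$ the bridge is the pair of identities
\[
J_\gamma^{n,0}(x/n)\longrightarrow \phi(x), \qquad \bigl(J_\gamma^{n,0}\bigr)^{(n-d)}(x)=\frac{n!}{d!}\,J_\gamma^{d,\,n-d}(x),
\]
the second of which I would verify by a direct reindexing of the binomial coefficients. For $(2)\Rightarrow(3)$ I take the witnessing sequence to be $\phi_n(x):=J_\gamma^{n,0}(x/n)$, of degree $d_n=n$; by the derivative identity and the chain rule its $(d_n-d)$-th derivative is a positive scalar multiple of $J_\gamma^{d,\,n-d}(x/n)$, whose roots are $n$ times those of $J_\gamma^{d,\,n-d}$ and hence real and non-positive as soon as $n-d\ge N_2(d)$. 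To make ``$\phi\in\mathcal{SL\text{-}P}I$'' meaningful I must first know $\phi$ is entire: the $d=2$ case of $(2)$ gives $\gamma_{n+1}^2\ge\gamma_n\gamma_{n+2}$ for all large $n$, so (assuming positivity, else $\phi$ is a polynomial) the ratios $\gamma_{n+1}/\gamma_n$ are eventually non-increasing, $\{\gamma_n\}$ grows at most geometrically, and $\phi$ is entire of order $\le1$; this same growth bound yields the locally uniform convergence $\phi_n\to\phi$ needed to apply the definition. For the converse $(3)\Rightarrow(2)$ I would use Lemma~\ref{converge} to replace the arbitrary approximating sequence in the definition of $\mathcal{SL\text{-}P}I$ by the Jensen polynomials themselves, and then read the derivative identity in reverse: real-rootedness with non-positive roots of $\bigl(J_\gamma^{n,0}\bigr)^{(n-d)}$ is, up to the harmless positive scalar, real-rootedness of $J_\gamma^{d,\,n-d}$, which is $(2)$.

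The main obstacle I anticipate is the $(2)\Rightarrow(1)$ step, namely the finite-degree P\'olya--Schur criterion: passing from control of the single extremal image $\Gamma\bigl((1+x)^d\bigr)$ to control of every degree $\le d$ real-rooted input. The clean $d=2$ computation, where the worst case $r_1=r_2$ forced by the arithmetic--geometric mean inequality reproduces exactly the discriminant of $J_\gamma^{2,\ell}$, is the prototype, but the general statement requires the finite-dimensional preservation theory rather than a bare calculation. A secondary technical point is justifying, inside $(3)\Rightarrow(2)$, that membership in $\mathcal{SL\text{-}P}I$ can always be witnessed by the Jensen polynomials; this is precisely the role of Lemma~\ref{converge}, together with the order $\le1$ bound that guarantees the relevant uniform convergence and, en route, the Weierstrass factorization recorded in the remark following the theorem.
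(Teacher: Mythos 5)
Your cycle $(1)\Leftrightarrow(2)$, $(2)\Rightarrow(3)$ agrees with the paper in substance, and your growth and order estimates inside $(2)\Rightarrow(3)$ are fine. The genuine gap is in $(3)\Rightarrow(2)$. Membership in $\mathcal{SL\text{-}P}I$ asserts only that \emph{some} sequence of polynomials $\phi_m\to\phi$ has the property that $\phi_m^{(d_m-d)}$ is real-rooted with roots of one sign; it says nothing a priori about the Jensen polynomials of $\phi$. Lemma \ref{converge} gives only the locally uniform convergence $J_\gamma^{m,0}(x/m)\to\phi(x)$; it does not transfer real-rootedness of derivatives from the unknown witnessing sequence to the Jensen polynomials. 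Since, by the derivative identity you yourself record, ``the Jensen polynomials witness membership'' is literally equivalent to statement $(2)$, your step ``replace the arbitrary approximating sequence by the Jensen polynomials themselves'' assumes exactly what is to be proved. The missing transfer argument (the paper's route, via $(3)\Rightarrow(1)$) runs as follows: write $\phi_m(x)=\sum_k\gamma_{k,m}x^k/k!$; locally uniform convergence forces $\gamma_{k,m}\to\gamma_k$; the polynomial $\phi_m^{(d_m-d)}(x)=\sum_{k=0}^{d}\gamma_{d_m-d+k,m}\,x^k/k!$ has degree $d$ and all real roots of one sign, so the composition theorem shows the finite window $\{\gamma_{d_m-d+k,m}\}_{k=0}^{d}$ is an order $d$ multiplier sequence; and order $d$ multiplier sequences are closed under coefficientwise limits (the Hurwitz-type lemma in Section 2), so the corresponding windows of the limit sequence $\{\gamma_k\}$ inherit the property, with $m\to\infty$ covering all sufficiently large shifts. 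Without some version of this your $(3)\Rightarrow(2)$ does not close.

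A secondary point: in $(2)\Rightarrow(1)$ you invoke a ``finite-degree P\'olya--Schur criterion'' as a black box and flag it as the main obstacle. It is not an obstacle, and you should not leave it unproved: Lemma \ref{degree} reduces to all degrees $d'\le d$ from the single hypothesis at degree $d$ (so no maximum over $d'$ is needed), and then for any real-rooted $f(x)=\sum_{k=0}^{m}a_kx^k=\sum_{k=0}^{m}\binom{m}{k}\bigl[\binom{m}{k}^{-1}a_k\bigr]x^k$ with $m\le d$, the Schur--Szeg\"{o} composition theorem (Lemma \ref{SS}) applied with $f_2=J_\gamma^{m,n}$, whose roots are real and of one sign, yields that $\sum_{k}a_k\gamma_{k+n}x^k$ is real-rooted. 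This two-line derivation is the paper's argument, and the same composition theorem is also the tool needed to repair $(3)\Rightarrow(2)$ above.
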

\begin{rmk}
The general philosophy of functions in the shifted Laguerre-P\'{o}lya class is that the Taylor coefficients $\gamma_{k}$ should behave more and more like Taylor coefficients of a function in the Laguerre-P\'{o}lya class as $k$ grows.
\end{rmk}
\begin{rmk}
The above theorem can be adjusted so the results hold for order $d$ multiplier sequences of the second kind and functions in $\mathcal{SL\text{-}P}$.
\end{rmk}
\begin{rmk}
A recent result of Ono, Pujahari, and Rolen \cite{OPR} shows that
\begin{equation*}
\sum_{k=0}^{\infty} \frac{pp(k)}{k!} x^k \in \mathcal{SL\text{-}P}I,
\end{equation*}
where $pp(k)$ is the number of plane partitions of $k$.  This is unique amongst recent results as the generating function for $pp(k)$ is not modular.  In \cite{HN2} evidence is given that similar behavior may hold for many sequences with infinite product generating functions.
\end{rmk}

For any real entire function $\phi(x)$, one can define the operators $\{L_{k}\}_{k \geq 0}$ by
\begin{equation} \label{L}
|\phi(x+iy)|^2 = \phi(x+iy)\phi(x-iy) = \sum_{k=0}^{\infty} L_{k}(\phi(x)) y^{2k}, \qquad x, y \in \R.
\end{equation}
The first few operators are $L_{0}(\phi(x)) = \phi(x)^2$ and $L_{1}(\phi(x)) = \phi'(x)^2 - \phi(x) \phi''(x)$.  $L_{1}(\phi(x))$ was first studied by Laguerre and can clearly bee seen to be related to the zeros of $\phi(x)$ by $L_{1}(\phi(x)) = -\phi(x)^2 \frac{\partial^2}{\partial x^2} \ln(\phi(x))$.  Using the generalized Leibniz rule the other operators can be given explicitly by
\begin{equation} \label{L2}
L_{k}(\phi(x)) = \sum_{j=0}^{2k} \frac{(-1)^{j+k}}{(2k)!} \binom{2k}{j} \phi^{(j)}(x) \phi^{(2k-j)}(x).
\end{equation}
Csordas and Varga \cite{CV1} studied these operators and later Csordas and Vishnyakova \cite{CV2} proved that $\phi \in \mathcal{L\text{-}P}$ is equivalent to $L_{k}(\phi(x)) \geq 0$ for all $k \in \NN$ and all $x \in \R$.  We are able to prove one direction of the analogue of this result and conjecture that the other is also true.
\begin{Thm} \label{LThm}
Let $\phi(x) \in \mathcal{SL\text{-}P}$.  Then for each $d \in \NN$, there exists an $N_{3}(d)$ such that $L_{k}(\phi^{(n)}(x)) \geq 0$ for all $0 \leq k \leq d, n \geq N_{3}(d)$ and $x \in \R$.
\end{Thm}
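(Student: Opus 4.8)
The plan is to prove the statement for a single operator $L_k$ and then set $N_3(d)=\max_{0\le k\le d}N_k$, so that it suffices to show: for each fixed $k\in\NN$ there is an $N_k$ with $L_k(\phi^{(n)}(x))\ge 0$ for all $n\ge N_k$ and all $x\in\R$. The guiding principle, via the Csordas--Vishnyakova criterion, is that nonnegativity of the first few $L_k$ records $\phi^{(n)}$ behaving like a member of $\mathcal{L\text{-}P}$ on the relevant scale; the two structural inputs I would exploit are Theorem~\ref{SLP} and the translation invariance of the class. For the latter, observe that if $\{\phi_m\}$ is an approximating sequence witnessing $\phi\in\mathcal{SL\text{-}P}$, then for any $a\in\R$ the translates $\phi_m(\,\cdot+a)$ are real polynomials of the same degree $d_m$ whose derivatives $\phi_m^{(d_m-d)}(\,\cdot+a)$ are again real-rooted; hence $\phi(\,\cdot+a)\in\mathcal{SL\text{-}P}$ with the \emph{same} thresholds $N(d)$. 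Since by \eqref{L2} the quantity $L_k(\phi^{(n)}(a))$ depends only on the values $\phi^{(n+j)}(a)=(\phi(\,\cdot+a))^{(n+j)}(0)$ for $0\le j\le 2k$, we have the identity $L_k(\phi^{(n)}(a))=L_k\big((\phi(\,\cdot+a))^{(n)}(0)\big)$, which reduces the global inequality to the single base point $x=0$ for the whole family $\{\phi(\,\cdot+a)\}_{a\in\R}$.

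Next I would establish the base-point case $L_k(\psi^{(n)}(0))\ge 0$ for $\psi\in\mathcal{SL\text{-}P}$ and $n$ large. By \eqref{L2} this is a fixed quadratic expression in the shifted Taylor data $\psi^{(n)}(0),\dots,\psi^{(n+2k)}(0)$, i.e.\ in the coefficients appearing in the Jensen polynomial $J^{2k,n}_{\gamma}$ of \eqref{J}. By Theorem~\ref{SLP}, for $n\ge N_2(2k)$ this Jensen polynomial has only real roots, so the shifted sequence satisfies the corresponding higher Tur\'an inequalities; for $k=1$ this is exactly the log-concavity $\gamma_{n+1}^2\ge\gamma_n\gamma_{n+2}$, which \emph{is} the statement $L_1(\psi^{(n)}(0))\ge 0$, and for general $k$ the inequality is read off from real-rootedness (equivalently, from Csordas--Vishnyakova applied to the limiting object). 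Crucially, I would upgrade this to a \emph{strict} inequality for large $n$: after the Griffin--Ono--Rolen--Zagier renormalization the Jensen polynomials converge to the Hermite polynomial $H_{2k}$, and since $L_k$ is continuous and sign-preserving under this rescaling while $H_{2k}$ satisfies the strict Laguerre inequalities, one obtains $L_k(\psi^{(n)}(0))>0$ once $n$ is large.

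With strictness in hand I would treat the two regimes of $x$ separately. For $x$ in a fixed compact interval $[-R,R]$, the translates $\{\phi(\,\cdot+a):a\in[-R,R]\}$ have comparable coefficient asymptotics, so the renormalized Jensen polynomials of $\phi(\,\cdot+a)$ converge to $H_{2k}$ \emph{uniformly} in $a\in[-R,R]$; combining this uniform convergence with the strict positivity of the limiting Laguerre inequality and a compactness argument produces a single threshold $N_k(R)$ valid for all $a\in[-R,R]$. For $|x|>R$ I would instead use the Weierstrass factorization of $\phi\in\mathcal{SL\text{-}P}I$ recorded in the Remark following Theorem~\ref{SLP}, splitting off the leading exponential/monomial behaviour and estimating $|\phi^{(n)}(x+iy)|^2=\sum_k L_k(\phi^{(n)}(x))y^{2k}$ through the factorization, to show that for $x$ outside a bounded set the contribution to each low-order coefficient $L_k$ is dominated by manifestly nonnegative terms.

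The main obstacle is precisely this uniformity in $x$, and within it the large-$|x|$ regime. The reduction to $x=0$ is uniform at the level of the thresholds $N(d)$ in the \emph{definition} of $\mathcal{SL\text{-}P}$, but the quantitative passage to $L_k\ge0$ runs through the Griffin--Ono--Rolen--Zagier asymptotics, whose normalizing constants are governed by the growth of the Taylor coefficients and therefore change under translation; making these asymptotics uniform over all real shifts $a$ is the technical heart of the argument. The difficulty is compounded by the fact, flagged in the Remark, that the location (in particular the imaginary parts) of the zeros $x_k$ of a function in $\mathcal{SL\text{-}P}I$ is not well controlled, so the factorization estimate for large $|x|$ cannot simply invoke a uniform horizontal strip containing all the zeros. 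The strictness of the Hermite limit is what ultimately buys the room needed to absorb the approximation errors on compact sets, while a separate growth estimate is required to close off the tail.
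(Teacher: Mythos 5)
Your reduction to the base point $x=0$ via translation is correct as far as it goes, but the route you then take does not close, and you have correctly identified where: the uniformity in the translation parameter $a$ (equivalently in $x$) and the large-$|x|$ regime are left open, and they cannot be repaired the way you propose. The Hermite asymptotics of Griffin--Ono--Rolen--Zagier are not available for an arbitrary $\phi\in\mathcal{SL\text{-}P}$ --- they require specific growth and asymptotic hypotheses on the coefficients $\gamma_k$ that are not part of the definition of the class --- so the strictness step, on which your compactness argument over $a\in[-R,R]$ leans, is not legitimate at this level of generality. Likewise the Weierstrass factorization in the Remark is stated for $\mathcal{SL\text{-}P}I$ rather than $\mathcal{SL\text{-}P}$ and, as you note yourself, gives no control on the location of the zeros that would let you close the $|x|>R$ case. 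So there is a genuine gap: producing a single threshold $N_3(d)$ valid for every real $x$ is exactly the part of the theorem that your outline leaves unproved.

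The difficulty is self-inflicted, and the paper's proof avoids it entirely by never leaving the polynomial approximants. If $p$ is a real polynomial with all real roots $r_1,\dots,r_{2d}$ and leading coefficient $c$, then $|p(x+iy)|^2=c^2\prod_j\bigl((x-r_j)^2+y^2\bigr)$, so every coefficient $L_k(p(x))$ of $y^{2k}$ is a sum of products of squares and is nonnegative \emph{for every real $x$ simultaneously}; no base-point reduction, renormalization, or asymptotic input is needed, and no strictness is required because one only passes nonnegative quantities to a limit. The definition of $\mathcal{SL\text{-}P}$ supplies, for $n\ge N(2d)$, real-rooted degree-$2d$ polynomials $\phi_n^{(d_n-2d)}$ whose derivatives up to order $2d$ converge (locally uniformly, e.g.\ for the Jensen-polynomial approximants of Lemma \ref{converge}) to the corresponding high-order derivatives of $\phi$; since $L_k$ in the form \eqref{L2} is a continuous expression in the $2k+1\le 2d+1$ derivatives involved, the pointwise inequality $L_k\ge 0$ passes to the limit at every $x\in\R$ at once, with no case split between compact and unbounded $x$. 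If you want to salvage your outline, replace the Hermite-limit step by this elementary observation: real-rootedness of the relevant degree-$2d$ approximant already encodes $L_k\ge 0$ at \emph{all} $x$, not merely the Tur\'{a}n-type inequalities at $x=0$.
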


\begin{Conj}
The converse direction of Theorem \ref{LThm} is true.
\end{Conj}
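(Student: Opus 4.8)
The plan is to prove the conjecture by reversing Theorem~\ref{LThm} through the main theorem, Theorem~\ref{SLP}, using the characterization of $\mathcal{L\text{-}P}$ by nonnegativity of the $L_k$ as the bridge. Assume the shifted Laguerre inequalities: for each $d$ there is $N_3(d)$ with $L_k(\phi^{(n)})(x)\ge 0$ for all $0\le k\le d$, all $n\ge N_3(d)$, and all $x\in\R$ (and that the $\gamma_k$ are eventually nonnegative, as required to invoke Theorem~\ref{SLP}). By the $\mathcal{SL\text{-}P}$-analogue of Theorem~\ref{SLP} noted in the remark following it, it suffices to prove that for each \emph{fixed} $d$ the Jensen polynomials $J_{\gamma}^{d,n}(x)=\sum_{k=0}^{d}\binom{d}{k}\gamma_{k+n}x^k$ have all real roots for all sufficiently large $n$; Lemma~\ref{converge} then lets us use these Jensen polynomials as the approximating sequence and places $\phi\in\mathcal{SL\text{-}P}$. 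Crucially this target asks only for one fixed degree at a time, so, unlike demanding that some derivative $\phi^{(n)}$ itself lie in $\mathcal{L\text{-}P}$, it is compatible with $\phi$ carrying infinitely many non-real zeros (cf.\ the remark after Theorem~\ref{SLP}); this is why I do \emph{not} route through membership $\phi^{(n)}\in\mathcal{L\text{-}P}$, which is likely too strong to hold.

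The core of the argument is therefore the implication: for each $d$ there exist $D(d)$ and $N(d)$ such that global nonnegativity of $L_k(\phi^{(n)})$ for all $0\le k\le D(d)$ forces $J_{\gamma}^{d,n}$ to be real-rooted, for $n\ge N(d)$. By Csordas--Vishnyakova applied to the degree-$d$ polynomial $J_{\gamma}^{d,n}$, real-rootedness is equivalent to $L_j(J_{\gamma}^{d,n})(x)\ge 0$ for all $x$ and all $j$, of which only $j\le d$ are nontrivial, so the task is to deduce finitely many global Laguerre inequalities for $J_{\gamma}^{d,n}$ from finitely many for the full function $\phi^{(n)}$. The case $d=2$ is instructive and favorable: $J_{\gamma}^{2,n}=\gamma_n+2\gamma_{n+1}x+\gamma_{n+2}x^2$ is real-rooted exactly when $\gamma_{n+1}^2-\gamma_n\gamma_{n+2}\ge 0$, i.e.\ when $L_1(\phi^{(n)})(0)\ge 0$, which the hypothesis supplies for $n\ge N_3(1)$, so $D(2)=1$ and $N(2)=N_3(1)$ work. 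For $d\ge 3$ the discriminant of $J_{\gamma}^{d,n}$ is the higher Tur\'an expression and is no longer captured by the $L_k(\phi^{(n)})$ at a single point, since $L_k(\phi^{(n)})(0)$ already involves $\gamma_{n+2k}$, reaching beyond the coefficients $\gamma_n,\dots,\gamma_{n+d}$ that enter $J_{\gamma}^{d,n}$; one must instead exploit the inequalities $L_k(\phi^{(n)})(x)\ge 0$ at \emph{all} $x$.

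The main structural tool I would bring to bear is the identity coming from $\Delta|f(z)|^2=4|f'(z)|^2$ for holomorphic $f$: comparing coefficients of $y^{2k}$ in $|\phi^{(n)}(x+iy)|^2$ gives
\begin{equation*}
(2k+2)(2k+1)\,L_{k+1}\!\left(\phi^{(n)}\right)(x)=4\,L_k\!\left(\phi^{(n+1)}\right)(x)-\frac{d^2}{dx^2}L_k\!\left(\phi^{(n)}\right)(x).
\end{equation*}
This trades a Laguerre expression of index $k+1$ at shift $n$ for one of index $k$ at the larger shift $n+1$, where more of the hypothesized inequalities are available, and so is well suited to an induction that propagates positivity inward as $n$ grows; the sign-indefinite term $\frac{d^2}{dx^2}L_k(\phi^{(n)})$ is the obstruction that must be controlled, presumably via uniform order-$\le 2$ growth estimates on $\phi$ and its derivatives.

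The hard part, and the reason this remains conjectural, is exactly this last passage: converting global nonnegativity of the first several $L_k(\phi^{(n)})$ into real-rootedness of the fixed-degree Jensen polynomial, uniformly for large $n$, together with pinning down how large $D(d)$ must be relative to $d$. An alternative, equivalent-looking route is the contrapositive via normal families: if $J_{\gamma}^{d,n}$ fails to be real-rooted along an infinite set of $n$, one renormalizes $\phi^{(n)}$ so that the degree-$d$ Jensen polynomials converge to a limit $P_d$, argues $P_d\notin\mathcal{L\text{-}P}$, and transfers a strictly negative value of some $L_j(P_d)$ back to $L_j(\phi^{(n)})<0$ by continuity, contradicting the hypothesis. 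The difficulty there is identical in spirit: one must prevent the offending non-real roots from escaping to infinity under renormalization, so that the limit genuinely detects them, without the quantitative asymptotics that \cite{GORZ} assume, and in the presence of infinitely many complex zeros the P\'olya--Wiman--Kim theorem \cite{CCS,K} offers no help.
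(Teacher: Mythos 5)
First, a framing point: the statement you were asked to prove is stated in the paper as an open \emph{conjecture}; the paper contains no proof of it, so the only question is whether your argument closes it. It does not, and you say so yourself. What you have written is a research outline whose preparatory steps are correct but whose decisive step is missing. To give credit where due: the reduction via the $\mathcal{SL\text{-}P}$-form of Theorem \ref{SLP} and Lemma \ref{converge} to showing that, for each fixed $d$, the Jensen polynomials $J_{\gamma}^{d,n}$ are real-rooted for all large $n$ is a sensible strategy; your $d=2$ verification $L_{1}(\phi^{(n)})(0)=\gamma_{n+1}^{2}-\gamma_{n}\gamma_{n+2}$ is correct; and your identity
\begin{equation*}
(2k+2)(2k+1)\,L_{k+1}\!\left(\phi^{(n)}\right)(x)=4\,L_{k}\!\left(\phi^{(n+1)}\right)(x)-\frac{d^{2}}{dx^{2}}L_{k}\!\left(\phi^{(n)}\right)(x),
\end{equation*}
obtained from $\Delta|f|^{2}=4|f'|^{2}$, checks out upon comparing coefficients of $y^{2k}$ in \eqref{L}. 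Your observation that one should not route through $\phi^{(n)}\in\mathcal{L\text{-}P}$ is also well taken, since the hypothesis only supplies, for each $d$, finitely many inequalities holding from some $N_{3}(d)$ onward, with $N_{3}(d)$ possibly unbounded in $d$.

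The genuine gap is the one you name: for $d\geq 3$ you have no mechanism that converts global nonnegativity of $L_{k}(\phi^{(n)})$ for $0\leq k\leq D(d)$ into real-rootedness of the fixed-degree polynomial $J_{\gamma}^{d,n}$ for large $n$, and this implication is precisely the open content of the conjecture, not a technical detail. Your proposed induction founders on the sign-indefinite term $\frac{d^{2}}{dx^{2}}L_{k}(\phi^{(n)})$, which you say should ``presumably'' be controlled by growth estimates; no such control is known, and nothing in the hypothesis bounds this term. The normal-families alternative has the same defect: without quantitative asymptotics of the kind assumed in \cite{GORZ}, you cannot prevent the non-real zeros from escaping under renormalization, so the limit polynomial need not witness the failure. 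Two smaller points. First, you insert the extra hypothesis that the $\gamma_{k}$ are eventually nonnegative in order to invoke Theorem \ref{SLP}; Theorem \ref{LThm} concerns $\mathcal{SL\text{-}P}$, not $\mathcal{SL\text{-}P}^{+}$, so even a completed version of your argument would prove a weakened form of the conjecture. Second, invoking the Csordas--Vishnyakova criterion \cite{CV2} for the polynomial $J_{\gamma}^{d,n}$ is legitimate, but it merely restates the target as another family of global Laguerre inequalities, of the same kind you started from, so no progress occurs at that step. In short: correct auxiliary computations, reasonable plan, but the central passage from finitely many eventual Laguerre inequalities for $\phi$ to degree-$d$ hyperbolicity of the shifted Jensen polynomials is absent, and with it the proof.
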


The results of \cite{GORZ} then say that $\Xi(i \sqrt{x}) \in \mathcal{SL\text{-}P}$.  In \cite{W} these results were extended to include any suitably nice completed $L$-function $\Xi_{L}(x)$, where suitably nice means the $L$-function has real coefficients, an integral representation, and a functional equation.  Further, if $\{\gamma_{k}\}_{k \geq 0}$ are the Taylor coefficients of a modular form (with possibly fractional weight and with multiplier system) on $SL_{2}(\Z)$ with real Fourier coefficients and holomorphic apart from a pole of positive order at infinity, then it is also shown in \cite{GORZ} that $\phi_{\gamma}(x) := \sum_{k=0}^{\infty}\frac{\gamma_k}{k!}x^k \in \mathcal{SL\text{-}P}I$.   As an immediate application of Theorem \ref{SLP} and Theorem \ref{LThm} we have the following result.
\begin{Thm} \label{PL}
Assume the notation above and let $\gamma = \{\gamma_{k}\}_{k \geq 0}$ be the coefficients of a suitable modular form or the Taylor coefficients of a suitable completed $L$-function $\Xi_{L} \left(x \right)$.  For each $d \in \NN$, there exists an $N_{4}(d)$ such that
\begin{equation} \label{PIneqs}
L_{k}(\phi_{\gamma}^{(n)}(x))\left. \right|_{x=0}=\sum_{j=0}^{2k} \frac{(-1)^{j+k}}{(2k)!} \binom{2k}{j} \gamma_{n+j}\gamma_{n+2k-j} \geq 0, \qquad 0 \leq k \leq d
\end{equation}
for all $n \geq N_{4}(d)$.
\end{Thm}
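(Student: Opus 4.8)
The plan is to combine the membership results quoted just before the statement with Theorem \ref{LThm}, the only genuinely new ingredient being the evaluation of the operators $L_k$ at the origin. First I would invoke \cite{GORZ} and \cite{W}: for $\gamma$ the Taylor coefficients of a suitable completed $L$-function $\Xi_L(x)$, or the Fourier coefficients of a suitable modular form, these references show that the associated Jensen polynomials $J_{\gamma}^{d,n}(x)$ have all real roots for every $d$ once $n$ is large enough. By the equivalence of (2) and (3) in Theorem \ref{SLP}, this says exactly that $\phi_{\gamma}(x) = \sum_{k \geq 0} \frac{\gamma_k}{k!} x^k$ lies in $\mathcal{SL\text{-}P}I$, and in particular in $\mathcal{SL\text{-}P}$.

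Next I would apply Theorem \ref{LThm} directly to $\phi_{\gamma}$. It furnishes, for each $d \in \NN$, an integer $N_3(d)$ such that $L_k(\phi_{\gamma}^{(n)}(x)) \geq 0$ for all $0 \leq k \leq d$, all $n \geq N_3(d)$, and all $x \in \R$. I would then simply set $N_4(d) = N_3(d)$, so that the asserted nonnegativity already holds for every real $x$, and in particular at the single point $x = 0$ named in the statement.

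It remains to identify $L_k(\phi_{\gamma}^{(n)}(x))|_{x=0}$ with the stated sum. Starting from the explicit formula \eqref{L2} and replacing $\phi$ by $\phi_{\gamma}^{(n)}$, I would use the index shift $(\phi_{\gamma}^{(n)})^{(j)} = \phi_{\gamma}^{(n+j)}$ to write $L_k(\phi_{\gamma}^{(n)}(x)) = \sum_{j=0}^{2k} \frac{(-1)^{j+k}}{(2k)!} \binom{2k}{j} \phi_{\gamma}^{(n+j)}(x) \phi_{\gamma}^{(n+2k-j)}(x)$. Evaluating at $x=0$ and using that the Taylor expansion of $\phi_{\gamma}$ gives $\phi_{\gamma}^{(m)}(0) = \gamma_m$ for every $m$, each product collapses to $\gamma_{n+j} \gamma_{n+2k-j}$, which yields precisely \eqref{PIneqs}.

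Since every step is a citation, an earlier theorem, or a one-line Taylor-coefficient computation, there is no real obstacle: the entire substance of the result lives in Theorem \ref{LThm} together with the membership statements of \cite{GORZ} and \cite{W}, which is why the paper advertises it as an immediate application. The only point requiring mild care is the bookkeeping of the shift $(\phi^{(n)})^{(j)} = \phi^{(n+j)}$ and the verification that the evaluation at $x=0$ selects exactly the coefficient $\gamma_m$ rather than some multiple of it; both are straightforward from the definition of $\phi_{\gamma}$.
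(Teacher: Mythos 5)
Your proposal is correct and follows exactly the route the paper intends: the paper presents Theorem \ref{PL} as an immediate consequence of the membership results of \cite{GORZ} and \cite{W} together with Theorem \ref{LThm}, and your only added content --- the identification $\phi_{\gamma}^{(m)}(0)=\gamma_{m}$ and the index shift in \eqref{L2} --- is the same one-line computation the paper leaves implicit. Nothing further is needed.
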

In Section 4 we give suggested values for $N_{4}(d)$ in Table \ref{PartitionLaguerre}  when $\gamma_{k}=p(k)$ is the integer partition function and we discuss further inequalities.

This paper is organized as follows.  In Section 2 we will give some basic properties of order $d$ multiplier sequences and Jensen polynomials.  In Section 3 we will prove Theorems \ref{SLP} and \ref{LThm}.  In Section 4 we will give some background discussing inequalities satisfied by multiplier sequences and discuss some iterated inequalities.  In particular, we make an ambitious conjecture about functions in the shifted Laguerre-P\'{o}lya class and give some explicit conjectures for inequalities on the partition numbers. 

\section*{Acknowledgements}
The author thanks Ken Ono and Larry Rolen for helpful discussions related to this project, Anneliese Koontz for translating \cite{PS}, and the referee for offering several helpful suggestions that improved this paper.

\section{Background and basic properties}
\subsection{Basic properties of order $d$ multiplier sequences}
This section contains some basic properties of order $d$ multiplier sequences.  The results and proofs of these lemmas mirror those in Section 1 of \cite{PS}.

\begin{Lemma}
Let $\{\gamma_{k} \}_{k \geq 0}$ be an order $d \geq 3$ multiplier sequence of type I and let $n \geq N(3)$.  Then any zero terms of $\{\gamma_{n+k}\}_{k \geq 0}$ are either at the beginning or are at the end of the sequence.  Furthermore, if $\{\gamma_{k} \}_{k \geq 0}$ is an order $d \geq 2$ multiplier sequence of type I and $n \geq N(2)$, then the nonzero terms of $\{\gamma_{n+k}\}_{k \geq 0}$ are all either the same sign or alternate signs.
\end{Lemma}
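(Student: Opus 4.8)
The plan is to mirror the classical P\'olya--Schur arguments (Section~1 of \cite{PS}): feed carefully chosen real-rooted test polynomials of low degree into the shifted multiplier operator and read off sign and support constraints on the $\gamma_k$ from the requirement that the images again have only real zeros. The hypothesis gives, for every shift $m$ beyond the relevant threshold, that $\Gamma_{\{\gamma_{m+j}\}_j}$ sends every real-rooted polynomial of degree at most $d$ to a real-rooted polynomial; note that an order $d$ sequence is automatically an order $d'$ sequence for each $d'\leq d$, so I may use test polynomials of any degree up to $d$.

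For the sign assertion ($d\geq 2$, $m\geq N(2)$) I would apply $\Gamma_{\{\gamma_{m+j}\}_j}$ to the real-rooted $x^2-1=(x-1)(x+1)$. The image is the binomial $\gamma_{m+2}x^2-\gamma_m$, whose two roots are real exactly when $\gamma_m\gamma_{m+2}\geq 0$; thus terms two apart are never of opposite sign, the order-$d$ analogue of the classical identity $\mathrm{sign}(\gamma_{k-1})=\mathrm{sign}(\gamma_{k+1})$. Alongside this I would record the Tur\'an--Newton inequality $\gamma_{m+1}^2\geq \gamma_m\gamma_{m+2}$ coming from the real-rooted $(1+x)^2$. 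Once the nonzero terms are known to be contiguous, I split that block into its even- and odd-indexed subsequences; applying $\gamma_m\gamma_{m+2}\geq 0$ along each subsequence shows it has a single fixed sign, and the four possible sign-pairs yield exactly the two permitted global patterns, all of one sign or strictly alternating.

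For the zero assertion ($d\geq 3$, $m\geq N(3)$) I would show no zero can be interior. A lone interior zero is impossible: if $\gamma_{m+1}=0$ with $\gamma_m,\gamma_{m+2}\neq 0$, then $\gamma_{m+1}^2\geq\gamma_m\gamma_{m+2}$ gives $\gamma_m\gamma_{m+2}\leq 0$ while the $x^2-1$ test gives $\gamma_m\gamma_{m+2}\geq 0$, a contradiction. For a pair of adjacent interior zeros $\gamma_m\neq 0$, $\gamma_{m+1}=\gamma_{m+2}=0$, $\gamma_{m+3}\neq 0$ I would feed the real-rooted cubic $(x-1)(x-2)(x-3)$ into $\Gamma_{\{\gamma_{m+j}\}_j}$: the image collapses to the two-term polynomial $\gamma_{m+3}x^3-6\gamma_m$, whose nonzero constant and leading coefficients force two non-real roots, contradicting real-rootedness. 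In general a run of $g$ interior zeros bracketed by nonzeros gives, under the real-rooted test polynomial $\prod_{i=1}^{g+1}(x-i)$, a binomial $\alpha+\beta x^{g+1}$ with $\alpha,\beta\neq 0$, which has non-real roots; this leaves zeros only at the two ends.

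The main obstacle is exactly this support analysis. A test polynomial of fixed degree only probes a window of that many consecutive coefficients, so it detects a gap only when both of its bracketing nonzero terms fall inside one window; eliminating arbitrarily long interior runs of zeros therefore forces me to match the degree of the test polynomial to the length of the gap and to verify that the resulting lacunary image genuinely fails to have only real roots. The secondary point is the order of the argument: the clean ``one sign or alternating'' dichotomy for the nonzero terms really relies on the contiguity provided by the zero assertion, so I would establish contiguity first and only then deduce the global sign pattern from the local distance-two relation, keeping careful track of the thresholds $N(2)$ and $N(3)$ supplied by the definition of a shifted multiplier sequence.
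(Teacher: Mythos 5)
Your strategy is the same as the paper's in outline: probe the shifted sequence with low-degree real-rooted test polynomials and read off sign and support constraints. The quadratic part is identical --- the paper uses exactly $x^2-1$ and $(1+x)^2$ to obtain $\mathrm{sign}(\gamma_n)=\mathrm{sign}(\gamma_{n+2})$ and $\gamma_{n+1}^2\geq\gamma_n\gamma_{n+2}$ --- and your derivation of the ``same sign or alternating'' dichotomy from the distance-two relation matches the intended argument. Your treatment of a lone interior zero (combine the weak Tur\'an inequality with the distance-two sign condition to force $\gamma_m\gamma_{m+2}=0$, contradicting that both neighbours are nonzero) is in fact cleaner than the paper's, which instead invokes a strict inequality of Schur: a real-rooted $\sum_k c_kx^k$ with $c_0,c_d\neq0$ satisfies $c_r^2-\tfrac{r+1}{r}c_{r-1}c_{r+1}>0$ for $0<r<d$, so the image of a real-rooted cubic with all coefficients nonzero yields $\gamma_{n+1}^2a_1^2-2\gamma_n\gamma_{n+2}a_0a_2>0$, which simultaneously excludes a lone interior zero and a pair of consecutive zeros immediately bracketed by nonzero terms. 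Your cubic test $(x-1)(x-2)(x-3)$, whose image collapses to the lacunary binomial $\gamma_{m+3}x^3-6\gamma_m$ with necessarily non-real roots, accomplishes the same thing for the length-two run and is perfectly valid within degree $3$.

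The genuine problem is your plan for longer interior runs. A run of $g$ zeros requires, in your scheme, the test polynomial $\prod_{i=1}^{g+1}(x-i)$ of degree $g+1$, but the hypothesis only guarantees that real-rooted polynomials of degree at most $d$ are sent to real-rooted polynomials, and $d$ may equal $3$; for $g\geq 3$ the test you want is simply unavailable, and no window of four consecutive indices can contain both bracketing nonzero terms. You correctly identify this as ``the main obstacle,'' but you do not resolve it, so as written your argument establishes the first assertion only for interior runs of length at most two. To be fair, the paper's proof stops at essentially the same place: it observes that the strict Schur inequality ceases to apply once the consecutive zeros sit inside a longer string (since the constant or leading coefficient of the cubic's image then vanishes) and does not pursue that case further. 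So you have reproduced, rather than fallen short of, the published argument --- but you should not present ``match the degree of the test polynomial to the length of the gap'' as a viable route under the stated order-$d$ hypothesis, since that requires degrees the hypothesis does not supply.
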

\begin{proof}
First, consider the two polynomials $f_{1}(x)=x^2 -1$ and $f_{2}(x)=x^2 +2x +1$, which have roots $x=\pm 1$ and $x=-1$ respectively.  If $\gamma= \{\gamma_{k}\}_{k \geq 0}$ is an order $d \geq 2$ multiplier sequence of type I and $n \geq N(2)$, then $\Gamma_{\{\gamma_{k+n}\}}(f_{1}(x))$ has roots $x= \pm \sqrt{\gamma_{n}/\gamma_{n+2}}$ and $\Gamma_{\{\gamma_{k+n}\}}(f_{2}(x))$ has roots $x =\frac{-\gamma_{n+1} \pm \sqrt{\gamma_{n +1}^2 - \gamma_{n} \gamma_{n+2}}}{\gamma_{n+2}}$.  These roots must all be real so we immediately find that for nonzero terms ${\rm{sign}}(\gamma_{n}) = {\rm{sign}}(\gamma_{n+2})$ and $\gamma_{n+1}^2 \geq \gamma_{n}\gamma_{n+2}$ for all $n \geq 0$. 

In \cite{S} Schur shows that if $h(x) = \sum_{k=0}^{d} c_{k}x^k$ is a real polynomial with real roots and $c_{0}, c_{d} \neq 0$, then $c_{r}^2 - \frac{r+1}{r} c_{r-1}c_{r+1} >0$ for $0<r<d$.  Let $g(x) = a_0+ a_1 x + a_2 x^2 + a_3 x^3$ be a real polynomial with all real roots and $a_i \neq 0$ for all $i$.  Let $\gamma= \{\gamma_{k}\}_{k \geq 0}$ be an order $d \geq 3$ multiplier sequence of type I and $n \geq N(3)$.  We may shift the sequence sufficiently in order to assume that $\gamma_n, \gamma_{n+3} \neq 0$.  Then $\Gamma_{\{\gamma_{n+k}\}}(g(x))$ has all real roots and by the above result of Schur $\gamma_{n+1}^2 a_{1}^2 -2 \gamma_n \gamma_{n+2} a_0 a_2 >0$ for all $n \geq N(3)$.  Because this inequality is strict, we can't have two consecutive terms of $\{\gamma_{n+k}\}_{k \geq 0}$ be equal to zero, unless a string of them are all zero which would make the $x^3$ or constant term of $\Gamma_{\{\gamma_{n+k}\}}(g(x))$ equal to zero.  Suppose that we have a single zero term $\gamma_{n+1}=0$ and $\gamma_{n}, \gamma_{n+2} \neq 0$.  Then the result of Schur implies that $\gamma_{n}$ and $\gamma_{n+2}$ must have opposite signs.  This contradicts the discussion above so a single zero term is not possible.

\end{proof}
The above proof can also be used to show the following lemma.
\begin{Lemma}
Let $\{\gamma_{k}\}_{k\geq 0}$ be an order $d \geq 3$ multiplier sequence of type II and let $n \geq N(3)$.  If $\gamma_{n}=\gamma_{n+1}=0$, then $\gamma_{j}=0$ either for all $j \geq n$ or all $N(d) \leq j \leq n$.  If $\gamma_{n}=0$ and $\gamma_{n-1} \gamma_{n+1} \neq 0$, then ${\rm{sign}}(\gamma_{n-1}) \neq {\rm{sign}}(\gamma_{n+1})$.
\end{Lemma}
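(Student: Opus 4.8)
The plan is to mirror the proof of the previous lemma, replacing the quadratic $f_1(x)=x^2-1$ (whose roots $\pm 1$ have opposite sign and so are illegal for a type II sequence) by polynomials all of whose roots have the same sign, and to read off both assertions from one elementary fact: a binomial $c_0+c_Dx^D$ with $c_0c_D\neq 0$ fails to have all real roots once $D\geq 3$, while for $D=2$ it has two real roots exactly when $c_0/c_2<0$. Concretely, for each $D\geq 2$ set $g_D(x)=\prod_{j=1}^{D}(x+j)=\sum_{k=0}^{D}a_kx^k$; its roots $-1,\dots,-D$ all have the same sign and every $a_k>0$. If $p\geq N(d)$ and $D\leq d$, then the type II hypothesis guarantees that $\Gamma_{\{\gamma_{p+k}\}}(g_D(x))=\sum_{k=0}^{D}\gamma_{p+k}a_kx^k$ has only real zeros; for $D>d$ the same holds once the corresponding higher order multiplier property is invoked, which I return to below.

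For the second assertion I would apply this with $D=2$ and $p=n-1$. Because $\gamma_n=0$ the image collapses to $\gamma_{n-1}a_0+\gamma_{n+1}a_2x^2$, and since $\gamma_{n-1}\gamma_{n+1}\neq 0$ this degree $2$ binomial has both roots real precisely when $-\gamma_{n-1}a_0/(\gamma_{n+1}a_2)\geq 0$, i.e.\ (as $a_0,a_2>0$) when ${\rm sign}(\gamma_{n-1})\neq{\rm sign}(\gamma_{n+1})$. This is the same computation as the one carried out with $f_2(x)=x^2+2x+1$ in the previous proof, and it gives the claim at once. Alternatively one runs the Schur inequality $c_1^2-2c_0c_2>0$ on the cubic window $[n-1,n+2]$ when $\gamma_{n+2}\neq 0$, and falls back to the quadratic above when $\gamma_{n+2}=0$.

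For the first assertion I would argue by contradiction. If neither alternative holds then, using $\gamma_n=\gamma_{n+1}=0$, there is a largest index $p$ with $N(d)\leq p\leq n-1$ and $\gamma_p\neq 0$, and a smallest index $q\geq n+2$ with $\gamma_q\neq 0$; every term strictly between $p$ and $q$ vanishes, and the gap has length $q-p-1\geq 2$, so $D:=q-p\geq 3$. Feeding $g_D$ through $\Gamma_{\{\gamma_{p+k}\}}$ kills every interior coefficient and leaves the binomial $\gamma_pa_0+\gamma_qa_Dx^D$; as $D\geq 3$ and $\gamma_pa_0,\gamma_qa_D\neq 0$ this cannot have all real roots, contradicting the order $D$ multiplier property at shift $p$. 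This is precisely the mechanism behind the remark in the previous proof that two consecutive zeros are impossible ``unless a string of them are all zero which would make the $x^3$ or constant term equal to zero'': a flanked block of $\geq 2$ zeros forces the strict Schur inequality $c_r^2-\tfrac{r+1}{r}c_{r-1}c_{r+1}>0$ to read $0>0$ at the relevant $r$.

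The main obstacle is the bookkeeping of the shift against the length of the zero block. The contradiction above uses the order $D=q-p$ multiplier property at shift $p$, which is legitimate only once $p\geq N(D)$; the hypothesis supplies $p\geq N(d)$, so the argument closes immediately when the gap is short ($q-p\leq d$), but a gap longer than $d$ forces one to invoke the higher order multiplier property and to push the zero block leftward one step at a time, each step consuming one more order, until it either runs off to $+\infty$ (the first alternative) or reaches the boundary $N(d)$ (the second alternative). Making this propagation precise — in particular checking that the block cannot stall strictly between $N(d)$ and $n$ — is the delicate point, and it is exactly why the left-hand alternative is phrased with the threshold $N(d)$ rather than $N(3)$. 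For type I in the previous lemma this issue did not arise, because there the extra constraint ${\rm sign}(\gamma_m)={\rm sign}(\gamma_{m+2})$ coming from $f_1$ already rules out even a single interior zero.
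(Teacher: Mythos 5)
Your treatment of the second assertion is correct and is exactly what the paper intends: the paper offers no separate proof of this lemma (it only asserts that the previous proof adapts), and there the relevant tool is the quadratic $f_2(x)=(x+1)^2$, whose image under $\Gamma_{\{\gamma_{k+n-1}\}}$ is $\gamma_{n-1}+2\gamma_n x+\gamma_{n+1}x^2$; with $\gamma_n=0$, real-rootedness forces $\gamma_{n-1}\gamma_{n+1}\le 0$, which is precisely your computation with $g_2$. (Minor bookkeeping: you apply the operator at shift $n-1$, so strictly you need $n-1$ to exceed the relevant threshold rather than $n\ge N(3)$; the paper is equally loose on this point.)

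The first assertion is where the genuine gap lies, and you have correctly located it but not closed it. Your binomial argument disposes of a flanked zero block $\gamma_{p+1}=\dots=\gamma_{q-1}=0$ only when the order $q-p$ multiplier property is available at shift $p$; the hypothesis supplies only order $d$, so the argument fails outright when $q-p>d$. The ``propagation'' you sketch cannot rescue this, because each step would again require an order exceeding $d$, which is simply not a hypothesis. Moreover the obstruction is real and not merely a defect of the method: the sequence $\gamma=(1,0,0,0,1,0,0,\dots)$ sends every polynomial of degree at most $3$ to a monomial under every shift, hence is an order $3$ multiplier sequence of type II (with $N(3)=0$), yet $\gamma_1=\gamma_2=0$ while $\gamma_0$ and $\gamma_4$ are both nonzero. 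So the first assertion, read literally for $d=3$, fails; the cubic Schur argument (your case $q-p=3$, where $c_1^2-2c_0c_2>0$ reads $0>0$) excludes only a flanked block of length exactly two, and the binomial argument excludes only flanked blocks of length at most $d-1$. A complete proof must either strengthen the hypothesis (e.g.\ to a shifted multiplier sequence, so that arbitrarily high orders become available at sufficiently large shifts) or restrict the length of the zero block relative to $d$. Since the paper carries out no argument of its own here, you have not missed a step the paper performs; but the step is missing nonetheless, and you should flag the statement rather than present the propagation as a routine completion.
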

The following lemma is clear.
\begin{Lemma}
If $\{\gamma_{k}\}_{k \geq 0}$ and $\{\lambda_{k}\}_{k \geq 0}$ are order $d_{1}$ and $d_2$ multiplier sequences respectively, then $\{\gamma_{k} \lambda_{k}\}_{k \geq 0}$ is an order $\min(d_1, d_2)$ multiplier sequence.  Further, if both $\{\gamma_{k}\}_{k \geq 0}$ and $\{\lambda_{k}\}_{k \geq 0}$ are type I sequences, then so is $\{\gamma_{k} \lambda_{k}\}_{k \geq 0}$.  Otherwise $\{\gamma_{k} \lambda_{k}\}_{k \geq 0}$ is of type II.
\end{Lemma}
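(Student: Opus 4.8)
The plan is to reduce the statement to the behaviour of the associated operators under composition. The starting point is the identity $\Gamma_{\{\gamma_{k+n}\lambda_{k+n}\}} = \Gamma_{\{\gamma_{k+n}\}}\circ\Gamma_{\{\lambda_{k+n}\}}$, which is immediate on monomials since each operator is diagonal: $\Gamma_{\{\gamma_{k+n}\}}(x^k)=\gamma_{k+n}x^k$. Thus shifting by $n$ commutes with taking Hadamard products, and it suffices to analyse the composite operator for each fixed $n$. Two elementary observations drive the argument: first, each $\Gamma_{\{\gamma_{k+n}\}}$ can only lower the degree of a polynomial, so if $p$ has $\deg p\le d:=\min(d_1,d_2)$ then every intermediate polynomial also has degree $\le d$; second, since we work with eventually nonnegative sequences I may, after composing with the sign-flipping multiplier sequences $\{(-1)^k\}$ or $\{-1\}$ (whose operators are $p(x)\mapsto p(-x)$ and $p(x)\mapsto -p(x)$ and hence harmless), assume $\gamma_{k}\ge0$ and $\lambda_k\ge0$ on the relevant range.

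With these reductions the type I claim is the cleanest case: if both sequences are type I, then $\Gamma_{\{\lambda_{k+n}\}}$ preserves real-rootedness of an arbitrary real-rooted $p$ with $\deg p\le d\le d_2$, producing a real-rooted $q$ with $\deg q\le d\le d_1$, and then $\Gamma_{\{\gamma_{k+n}\}}$ preserves real-rootedness of the possibly mixed-sign $q$; so the composite preserves real-rootedness of every real-rooted $p$ of degree $\le d$, and the product is order $\min(d_1,d_2)$ of type I. The mixed case (one sequence type I, the other type II) is handled by exploiting the commutativity of the Hadamard product: apply the type II operator first to the same-sign input $p$, producing a real-rooted $q$, and then apply the type I operator, which accepts $q$ regardless of the signs of its roots. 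In both of these cases no control on the signs of the roots of the intermediate polynomial is needed.

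The genuine difficulty is the case in which both sequences are only type II, and it is exactly here that nonnegativity is essential. The obstacle is that a type II operator applied to a same-sign real-rooted polynomial may return one whose roots are of mixed sign (for instance the coefficient sequence of $e^{-x^2}$ sends $(1+x)^3$ to $1-6x^2$), after which a second type II operator is no longer guaranteed to preserve real-rootedness; without the nonnegativity hypothesis the conclusion can in fact fail. The key lemma I would isolate is therefore: if $\lambda_k\ge0$ and $\{\lambda_k\}$ is type II, then $\Gamma_{\{\lambda_{k+n}\}}$ maps a polynomial with all roots $\le0$ to a polynomial with all roots $\le0$ (and similarly for roots $\ge0$). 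This follows because a polynomial with all roots $\le0$ has single-signed coefficients, $\Gamma_{\{\lambda_{k+n}\}}$ preserves that sign pattern coefficientwise (the multipliers being nonnegative), and a real-rooted polynomial with single-signed coefficients has no positive root; the case of roots $\ge0$ follows by conjugating with $x\mapsto -x$. Granting this, the composite of two nonnegative type II operators sends a same-sign real-rooted $p$ to a same-sign real-rooted $q$ and then to a real-rooted polynomial, establishing that the product is an order $\min(d_1,d_2)$ multiplier sequence of type II. I expect this sign-preservation step, together with checking that the nonnegativity normalization is legitimate for all the required shifts $n$, to be the main point requiring care.
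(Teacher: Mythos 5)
The paper offers no argument for this lemma at all (it is introduced with the sentence ``The following lemma is clear''), so there is no proof of record to compare your route against; I will assess the proposal on its own terms. Your reduction to the composition identity $\Gamma_{\{\gamma_{k+n}\lambda_{k+n}\}}=\Gamma_{\{\gamma_{k+n}\}}\circ\Gamma_{\{\lambda_{k+n}\}}$ is the right move, and your treatment of the two cases in which at least one factor is of type I is correct and complete: when both are type I the composite visibly preserves real-rootedness of every real-rooted input of degree at most $\min(d_1,d_2)$, and in the mixed case commuting the factors so that the type II operator acts first on the same-sign-rooted input settles the type II conclusion with no control on the signs of the roots of the intermediate polynomial.

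The gap is in the case where both sequences are of type II, and it sits exactly at the step you flag as ``requiring care.'' The normalization ``compose with $\{-1\}$ or $\{(-1)^k\}$ to assume $\gamma_k,\lambda_k\ge 0$'' is not available for type II sequences: unlike type I sequences, whose nonzero terms have constant or alternating signs by the first lemma of Section 2, a type II multiplier sequence can have sign pattern $+,0,-,0,+,0,\dots$ --- for instance $\gamma_{2k}=(-1)^k(2k)!/k!$, $\gamma_{2k+1}=0$, the coefficient sequence of $e^{-x^2}$ that you yourself invoke --- and no composition with $\pm\{1\}_{k\ge0}$ or $\pm\{(-1)^k\}_{k\ge0}$ makes it nonnegative. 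Worse, for this sequence the unqualified statement fails: the Hadamard square $\{\gamma_k^2\}$ sends $(1+x)^2$ to $1+4x^2$, which has non-real roots, so $\{\gamma_k^2\}$ is not even an order $2$ multiplier sequence of type II, although $\{\gamma_k\}$ is a type II multiplier sequence of every order. Your key lemma (a type II operator with nonnegative multipliers preserves the property ``all roots $\le 0$,'' because such a polynomial has single-signed coefficients and a real-rooted polynomial with single-signed coefficients has no positive root) is correct and does close the type II $\times$ type II case under a genuine nonnegativity hypothesis; but that hypothesis cannot be manufactured by normalization, so the final clause of the lemma needs either ``at least one factor of type I'' or ``both factors nonnegative.'' Since the paper asserts the lemma without proof, this is a defect of the statement at least as much as of your argument, but your write-up should not present the normalization step as routine.
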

The last basic property of order $d$ multiplier sequences follows from \cite{S} as well.
\begin{Lemma}
If $\{\gamma_{k,i}\}_{k \geq 0}$, $i=1, 2, 3, \dots$, are infinitely many order $d$ multiplier sequences of type I or II and for each $k$ there exists a $\gamma_{k}$ such that $\lim_{i \to \infty} \gamma_{k,i} = \gamma_{k}$, then $\{\gamma_{k}\}_{k \geq 0}$ is also an order $d$ multiplier sequence of type I or II.
\end{Lemma}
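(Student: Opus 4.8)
The plan is to reduce the statement to the classical fact that the class of real polynomials of a fixed bounded degree having only real zeros is closed under coefficientwise limits, which is itself a consequence of Hurwitz's theorem on the continuity of zeros. First I would fix $n \in \NN$ and a real polynomial $p(x) = \sum_{k=0}^{d} a_{k} x^{k}$ of degree at most $d$; in the type~I case $p$ ranges over all such polynomials with only real zeros, and in the type~II case over those whose zeros are real and of one sign. For each $i$ set
\[
q_{i}(x) := \Gamma_{\{\gamma_{k+n,i}\}}(p(x)) = \sum_{k=0}^{d} a_{k}\, \gamma_{k+n,i}\, x^{k}.
\]
Since $\{\gamma_{k,i}\}_{k \geq 0}$ is an order $d$ multiplier sequence of the relevant type and $\deg p \leq d$, each $q_{i}$ has only real zeros. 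By hypothesis $\gamma_{k+n,i} \to \gamma_{k+n}$ for each of the finitely many indices $k = 0, \dots, d$, so the coefficients of $q_{i}$ converge to those of
\[
q(x) := \Gamma_{\{\gamma_{k+n}\}}(p(x)) = \sum_{k=0}^{d} a_{k}\, \gamma_{k+n}\, x^{k},
\]
and since $n$ and $p$ are arbitrary it suffices to show that $q$ has only real zeros.

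Next I would observe that coefficientwise convergence of polynomials of degree at most $d$ is the same as uniform convergence on compact subsets of $\C$, so $q_{i} \to q$ uniformly on compacta. If $q \equiv 0$ there is nothing to prove, so assume $q \not\equiv 0$, whence its zeros are isolated. Suppose for contradiction that $q(z_{0}) = 0$ for some $z_{0}$ with $\Im z_{0} \neq 0$. Choosing a small disk $D$ about $z_{0}$ lying strictly off the real axis with $q$ nonvanishing on $\partial D$, Hurwitz's theorem (equivalently Rouch\'{e}'s theorem) guarantees that $q_{i}$ has a zero inside $D$ for all large $i$, contradicting that every $q_{i}$ has only real zeros. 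Hence $q$ has no non-real zeros, and the reduction above completes the argument.

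Finally I would note that the type~I and type~II cases differ only in the admissible class of input polynomials $p$; on the output side both require merely that the image have real zeros, so the single argument covers both cases and preserves the type. The one point that needs care is the degenerate behaviour when leading coefficients $\gamma_{k+n,i}$ tend to $0$: this may lower the degree of $q$ or give $q \equiv 0$, but neither obstructs the conclusion, since a real-rooted polynomial of smaller degree is still real-rooted and the zero polynomial is handled trivially. This is essentially the only subtlety; as a consistency check, the Schur inequalities quoted earlier from \cite{S} are closed conditions and so automatically pass to the limit, although they are not needed for the proof itself.
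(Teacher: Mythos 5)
Your proof is correct. The paper itself gives no argument for this lemma --- it simply remarks that the statement ``follows from \cite{S}'' --- and the Hurwitz/Rouch\'{e} continuity-of-zeros argument you give is exactly the standard proof underlying that reference: you correctly reduce to finitely many coefficients for each fixed shift $n$ and input polynomial $p$ of degree at most $d$, pass from coefficientwise to locally uniform convergence, and note that the type~I and type~II cases differ only in the admissible inputs. Your handling of the degenerate cases (degree drop or $q \equiv 0$ when leading coefficients tend to zero) is also the right thing to flag, and is consistent with the paper's convention of treating real-rootedness as a closed condition.
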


\subsection{Basic properties of Jensen polynomials and entire functions}
In this subsection we will compile some useful results on Jensen polynomials and related objects.  Recall that the Jensen polynomial of degree $d$ and shift $n$ for the sequence of real numbers $\{\gamma_{k}\}_{k \geq 0}$ is given by
\begin{equation*}
J_{\gamma}^{d,n}(x) = \sum_{k =0}^{d} \binom{d}{k} \gamma_{k+n} x^k.
\end{equation*}
The derivatives of Jensen polynomials are also Jensen polynomials.  In particular, we have
\begin{equation} \label{Jderiv}
\frac{d^m}{dx^m} J_{\gamma}^{d,n}(x) = \frac{d!}{(d-m)!} J_{\gamma}^{d-m, n+m}(x).
\end{equation}
This property allows us to prove the following useful result.
\begin{Lemma} \label{degree}
Let $\{\gamma_{k}\}_{k \geq 0}$ be a sequence of real numbers.  If $J_{\gamma}^{d,n}(x)$ has all real roots, then $J_{\gamma}^{m,n}(x)$ has all real roots for all $1 \leq m \leq d$.
\end{Lemma}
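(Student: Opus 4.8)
The plan is to deduce the statement from the derivative identity \eqref{Jderiv}, Rolle's theorem, and one extra ingredient: the observation that reversing the coefficients of a Jensen polynomial again yields a Jensen polynomial. On its own, \eqref{Jderiv} is not enough, because differentiating $J_{\gamma}^{d,n}$ lowers the degree but simultaneously raises the shift, producing $J_{\gamma}^{d-m,n+m}$ rather than the $J_{\gamma}^{m,n}$ we want. Reversal is precisely the device that compensates for the unwanted increase in the shift.

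Concretely, first I would form the reverse $Q(x) := x^d J_{\gamma}^{d,n}(1/x)$. Writing $j = d-k$ and using $\binom{d}{d-j} = \binom{d}{j}$ gives $Q(x) = \sum_{j=0}^{d}\binom{d}{j}\gamma_{n+d-j}x^j$, which is exactly the degree $d$, shift $0$ Jensen polynomial $J_{\delta}^{d,0}(x)$ of the reversed sequence $\delta_i := \gamma_{n+d-i}$. Since $J_{\gamma}^{d,n}$ has all real roots, so does $Q$ (reversal sends each nonzero root $r$ to $1/r$ and only lowers the degree by the multiplicity of $0$ as a root). Now apply \eqref{Jderiv} to $Q$ with the parameter $d-m$ in place of $m$: the $(d-m)$-th derivative of $Q$ equals $\frac{d!}{m!}J_{\delta}^{m,d-m}(x)$, and a direct index computation shows $J_{\delta}^{m,d-m}(x) = \sum_{k=0}^{m}\binom{m}{k}\gamma_{n+m-k}x^k$. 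By Rolle's theorem this derivative, being the $(d-m)$-th derivative of a real-rooted real polynomial, again has only real roots. Reversing once more, $x^m J_{\delta}^{m,d-m}(1/x) = \sum_{j=0}^{m}\binom{m}{j}\gamma_{n+j}x^j = J_{\gamma}^{m,n}(x)$, so $J_{\gamma}^{m,n}$ is, up to the nonzero factor $m!/d!$, the reverse of a real-rooted polynomial and hence has all real roots. Running this for each $1 \le m \le d$ gives the lemma.

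The only points requiring care — and the place where a reader might worry — are the degenerate cases in which some of the relevant $\gamma_k$ vanish, so that reversal or differentiation strictly lowers the degree. These are routine: $x^N P(1/x)$ vanishes exactly at the reciprocals of the nonzero roots of $P$ and has degree $N$ minus the order of vanishing of $P$ at $0$, so reversal always preserves real-rootedness; and the derivative of a real polynomial with only real roots again has only real roots, with multiplicities handled by Rolle's theorem in the usual way. I expect no genuine obstacle beyond bookkeeping these edge cases and the index shifts in the two reversal computations.
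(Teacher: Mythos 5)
Your proof is correct and takes essentially the same route as the paper's: the reversal $Q(x)=x^{d}J_{\gamma}^{d,n}(1/x)$ is, up to the factor $1/d!$, exactly the Appell polynomial $P_{\gamma}^{d,n}(x)$ that the paper introduces, and your reverse--differentiate--reverse computation is just the paper's identity $\frac{d^{\,d-m}}{dx^{\,d-m}}P_{\gamma}^{d,n}(x)=P_{\gamma}^{m,n}(x)$ unpacked via \eqref{Jderiv} for the reversed sequence. The edge cases you flag (degree drop under reversal, multiplicities under differentiation) are handled correctly, so there is no gap.
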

\begin{proof}
Assume $J_{\gamma}^{d,n}(x)$ has all real roots.  We can define the degree $d$ shift $n$ Appell polynomial associated to the sequence $\gamma$ by
\begin{equation*}
P_{\gamma}^{d,n}(x) := \frac{1}{d!} x^d J_{\gamma}^{d,n}(1/x),
\end{equation*}
so $P_{\gamma}^{d,n}(x)$ has all real roots if and only if $J_{\gamma}^{d,n}(x)$ does.  Using equation \eqref{Jderiv} for the Jensen polynomials above, a straightforward calculation shows
\begin{equation*}
\frac{d^m}{dx^m} P_{\gamma}^{d,n}(x) = P_{\gamma}^{d-m,n}(x).
\end{equation*}
Differentiation preserves real-rootedness so these polynomials also have real roots which completes the proof.
\end{proof}
Given a sequence $\{\gamma_{k}\}_{k \geq 0}$ we associate to it the formal power series $\phi_{\gamma}(x) := \sum_{k=0}^{\infty} \frac{\gamma_{k}}{k!} x^k$.  The Jensen polynomials can be seen to be a distinguished set of polynomials when studying $\phi_{\gamma}(x)$ due to the following result of Craven and Csordas \cite{CC}.
\begin{Lemma} \label{converge}
Let 
\begin{equation*}
\phi_{\gamma}(x) =\sum_{k=0}^{\infty} \frac{\gamma_{k}}{k!} x^k
\end{equation*}
be an arbitrary entire function. Then 
\begin{equation*}
\lim_{d \to \infty} J_{\gamma}^{d,n} \left(\frac{x}{d} \right) = \phi_{\gamma}^{(n)}(x)
\end{equation*}
uniformly on compact subsets of $\C$.
\end{Lemma}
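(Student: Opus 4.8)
The plan is to expand the rescaled Jensen polynomial coefficientwise and compare it term by term against the Taylor series of $\phi_\gamma^{(n)}$, controlling the tail by an absolutely convergent dominating series coming from the fact that $\phi_\gamma$ is entire.

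First I would write out
$$J_\gamma^{d,n}\!\left(\frac{x}{d}\right) = \sum_{k=0}^{d}\binom{d}{k}\frac{\gamma_{k+n}}{d^{k}}\,x^{k} = \sum_{k=0}^{d} c_{k,d}\,\gamma_{k+n}\,x^{k},$$
where $c_{k,d} := \binom{d}{k}d^{-k} = \frac{1}{k!}\prod_{j=0}^{k-1}\bigl(1-\tfrac{j}{d}\bigr)$ (and $c_{k,d}:=0$ for $k>d$). Two elementary observations drive everything: each factor $1-j/d$ lies in $[0,1]$ for $0\le j\le d-1$, so $0\le c_{k,d}\le \frac{1}{k!}$ for all $k$; and for each fixed $k$ one has $c_{k,d}\to \frac{1}{k!}$ as $d\to\infty$. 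On the other side, reindexing the derivative gives the target series $\phi_\gamma^{(n)}(x)=\sum_{k\ge 0}\frac{\gamma_{k+n}}{k!}x^{k}$.

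Next I would estimate the difference on a fixed disk $|x|\le R$. Since $\frac{1}{k!}-c_{k,d}\ge 0$,
$$\sup_{|x|\le R}\Bigl|\phi_\gamma^{(n)}(x)-J_\gamma^{d,n}\!\left(\tfrac{x}{d}\right)\Bigr| \le \sum_{k=0}^{\infty}\Bigl(\frac{1}{k!}-c_{k,d}\Bigr)\,|\gamma_{k+n}|\,R^{k}.$$
Here the crucial input is that $\phi_\gamma$ is entire: its power series, and hence that of $\phi_\gamma^{(n)}$, converges absolutely at every point, so $M_k:=\frac{|\gamma_{k+n}|}{k!}R^{k}$ satisfies $\sum_{k\ge 0}M_k<\infty$. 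Each summand above is bounded by $M_k$ and tends to $0$ as $d\to\infty$, so Tannery's theorem (equivalently, dominated convergence for series) forces the right-hand side to $0$. This yields uniform convergence on $|x|\le R$, and since every compact subset of $\C$ is contained in such a disk, uniform convergence on compact subsets follows.

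The only real obstacle is making the passage to the limit rigorous rather than purely termwise: one cannot simply invoke $c_{k,d}\to \frac{1}{k!}$ coefficient by coefficient, because the number of terms grows with $d$. The fix is precisely the uniform domination by $M_k$ supplied by entireness, after which Tannery's theorem handles the interchange of limit and summation cleanly.
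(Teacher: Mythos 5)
Your proof is correct: the coefficientwise comparison $0\le c_{k,d}=\binom{d}{k}d^{-k}\le \frac{1}{k!}$ with $c_{k,d}\to\frac{1}{k!}$, together with the dominating series $\sum_k \frac{|\gamma_{k+n}|}{k!}R^k<\infty$ supplied by entireness and Tannery's theorem, gives exactly the claimed uniform convergence on disks and hence on compacta. The paper does not prove this lemma itself but cites it as a result of Craven and Csordas \cite{CC}; your argument is the standard one for that result, so there is nothing to compare beyond noting that you have supplied the proof the paper omits.
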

Our final preparatory lemma is a well-known result due to Schur and Szeg\"{o}.  See for example \cite{L}.
\begin{Lemma}[The Schur-Szeg\"{o} composition theorem] \label{SS}
Let 
\begin{equation*}
f_1(x) = \sum_{k=0}^{d} \binom{d}{k} a_{k} x^k, \qquad f_2(x) = \sum_{k=0}^{d} \binom{d}{k} b_{k}x^k
\end{equation*}
be two real polynomials.  If $f_1$ and $f_2$ have all real roots and the roots of $f_2$ all have the same sign, then the polynomial
\begin{equation*}
g(x) = \sum_{k=0}^{d} \binom{d}{k} a_k b_k x^k
\end{equation*}
also has all real roots.
\end{Lemma}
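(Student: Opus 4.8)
The plan is to deduce the result from Grace's apolarity theorem, which states that if two degree-$d$ polynomials $A(x)=\sum_k \binom{d}{k}\alpha_k x^k$ and $B(x)=\sum_k \binom{d}{k}\beta_k x^k$ are \emph{apolar}, meaning $\sum_{k=0}^d (-1)^k \binom{d}{k}\alpha_k \beta_{d-k}=0$, then every circular region (disk, half-plane, or complement of a disk) that contains all the zeros of one of them must contain at least one zero of the other. I would first reformulate the conclusion ``$g$ has only real zeros'' as a statement about each individual zero: suppose $g(x_0)=0$, and aim to show $x_0 \in \R$.

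The key algebraic step is to read the scalar equation $g(x_0)=\sum_{k=0}^d \binom{d}{k} a_k b_k x_0^k = 0$ as an apolarity relation. Define the auxiliary polynomial $\tilde{B}(x) := \sum_{j=0}^d \binom{d}{j}\tilde{b}_j x^j$ with $\tilde{b}_j := (-1)^{d-j} b_{d-j} x_0^{d-j}$. A direct computation gives $\tilde{b}_{d-k}=(-1)^k b_k x_0^k$, so the apolar inner product of $f_1$ and $\tilde{B}$ is exactly $\sum_k \binom{d}{k} a_k b_k x_0^k = g(x_0)=0$; hence $f_1$ and $\tilde{B}$ are apolar. Moreover, substituting $x=-x_0 u$ shows $\tilde{B}(x)=(-x_0)^d f_2^{\mathrm{rev}}(x/(-x_0))$, where $f_2^{\mathrm{rev}}(u)=u^d f_2(1/u)=\sum_j \binom{d}{j} b_{d-j} u^j$ is the reversal of $f_2$. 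Since the roots of $f_2$ are real, nonzero, and of one sign, the roots of $f_2^{\mathrm{rev}}$ are their reciprocals, again real and of that same sign, so the roots of $\tilde{B}$ are precisely the numbers $-x_0/\rho$ as $\rho$ runs over the roots of $f_2$.

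Now I would run the geometric argument. Assume for contradiction that $x_0 \notin \R$, so $\Im(x_0)\neq 0$. Because every root $\rho$ of $f_2$ has the same sign, each root $-x_0/\rho$ of $\tilde B$ is a real scalar multiple of $x_0$ of one fixed sign, so all roots of $\tilde{B}$ lie strictly in one of the two open half-planes determined by the real axis (the side of $x_0$, or the opposite side, according to the common sign of the $\rho$). The closed complementary half-plane is a circular region containing every root of $f_1$, since those roots are all real. Grace's theorem, applied to the apolar pair $(f_1,\tilde B)$, then forces this closed half-plane to contain a zero of $\tilde B$ --- contradicting the fact that all zeros of $\tilde B$ lie in the open half-plane on the other side. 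Hence $x_0$ must be real, and $g$ has only real zeros.

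I expect the main subtlety to be the careful bookkeeping that turns the same-sign hypothesis on the roots of $f_2$ into the statement that the roots of $\tilde B$ occupy a single open half-plane --- this is precisely where that hypothesis is used, and where a sign error would break the argument. A secondary technical point is the handling of degenerate cases: if $a_d b_d = 0$ then $\deg g < d$ and $\tilde B$ may drop degree, and if $f_2$ has a root at $0$ the reversal is not literally defined. Both are handled by a standard perturbation-and-limit argument: approximate $f_1$ and $f_2$ by nearby real-rooted polynomials with the same sign pattern and with nonvanishing leading and constant coefficients, apply the result above, and pass to the limit, using that real-rootedness is closed under locally uniform limits.
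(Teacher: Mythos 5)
Your proof is correct. Note first that the paper itself does not prove this lemma: it is stated as a classical result of Schur and Szeg\H{o} with a citation to Levin's book, so there is no in-paper argument to compare against. What you have written is the standard derivation of the composition theorem from Grace's apolarity theorem, and the details check out: with $\tilde{b}_{d-k}=(-1)^k b_k x_0^k$ the apolar pairing of $f_1$ and $\tilde B$ is exactly $g(x_0)=0$; the identity $\tilde B(x)=(-x_0)^d f_2^{\mathrm{rev}}(x/(-x_0))$ is right, so the zeros of $\tilde B$ are $-x_0/\rho$, i.e.\ positive multiples of $-sx_0$ where $s$ is the common sign of the roots $\rho$ of $f_2$, and these do lie in a single open half-plane when $x_0\notin\R$ (the case $x_0=0$ being vacuous); the closed complementary half-plane is a legitimate circular region containing the real zeros of $f_1$, so Grace's theorem yields the contradiction. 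Your closing perturbation-and-limit step is also genuinely needed, since Grace's theorem requires both polynomials to have degree exactly $d$ (so one needs $a_d\neq 0$ and $b_0\neq 0$, the latter because the leading coefficient of $\tilde B$ is $b_0$), and it works as described: multiply by factors $(1+\epsilon x)$ and nudge any zero roots of $f_2$ off the origin without changing the sign of the roots, then invoke Hurwitz's theorem for the coefficient-wise limit. This is exactly where the same-sign hypothesis on $f_2$ enters, and your bookkeeping of it is accurate.
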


\section{Proofs of theorems}
\begin{proof}[Proof of Theorem \ref{SLP}]
Throughout this proof we will assume without loss of generality that all sequences and Taylor coefficients are eventually non-negative. We will begin by showing that (1) and (2) are equivalent.  Assume that there is an $N_{1}(d)$ such that $\{\gamma_{k+n}\}_{k \geq 0}$ is an order $d$ multiplier sequence of type I for all $n \geq N_{1}(d)$.  Then apply $\Gamma_{\{\gamma_{k+n}\}}$ to $(1+x)^d$ to imply (2).  On the other hand, assume that there is an $N_{2}(d)$ such that $J_{\gamma}^{d,n}(x)=\sum_{k=0}^{d} \binom{d}{k} \gamma_{k + n} x^k$ has all real zeros of the same sign for all $n \geq N_{2}(d)$.  From Lemma \ref{degree} this also implies that $J_{\gamma}^{m,n}(x)=\sum_{k=0}^{m} \binom{m}{k} \gamma_{k+n}x^k$ has all real roots of the same sign for $m \leq d$.  Let $f(x) = \sum_{k=0}^{m}a_{k}x^k = \sum_{k=0}^{m} \binom{m}{k} \left[ \binom{m}{k}^{-1} a_k \right]x^k$ be a polynomial with all real zeros and $m \leq d$.  Then the Schur-Szeg\"o composition theorem implies that $\sum_{k=0}^{m} a_{k} \gamma_{k+n} x^k$ has all real roots for $m \leq d$.

We will now show that (2) implies (3).  Assume that for each $d \in \NN$, there exists an $N_{2}(d)$ such that $J_{\gamma}^{d,n}(x)$ has all real roots for $n \geq N_{2}(d)$.  From Lemma \ref{converge} we know that $J_{\gamma}^{d,n} \left(\frac{x}{d}\right)$ converges uniformly to $\phi_{\gamma}^{(n)}(x)$.  In particular, $J_{\gamma}^{d,0}\left(\frac{x}{d}\right)$ converges uniformly to $\phi_{\gamma}(x)$.  Equation \eqref{Jderiv} then implies that $\frac{d^{n}}{dx^{n}} J_{\gamma}^{d+n,0}\left(\frac{x}{d}\right)$ has all real roots for any $n \geq N_{2}(d)$.  Note that because $J_{\gamma}^{2,n}(x)$ has all real roots for $n$ large enough, we also have $\gamma_{n}^2 - \gamma_{n-1} \gamma_{n+1} \geq 0$ for $n$ large enough which is sufficient to show $\phi_{\gamma}(x)$ is entire.  We now need to show that $\phi_{\gamma}(x)$ has order at most $1$.  If each $\gamma_{n}=0$, then the conclusion holds so suppose $\gamma_{r}$ is the first nonzero term of the sequence.  Let $m>r$, then $J_{\gamma}^{m,n}(x)$ has all real roots for all $n \geq N_{2}(m)$ so we have
\begin{equation*}
J_{\gamma}^{m,n}(x) = \binom{m}{r} \gamma_{r+n}x^r \prod_{k=1}^{m-r}(1+r_{m,k}x) 
\end{equation*}
where the $r_{m,k}$ are all real and positive.  We then have
\begin{equation*}
\binom{m}{r+1} \gamma_{r+n+1} = \binom{m}{r} \gamma_{r+n} \sum_{k=1}^{m-r} r_{m,k}
\end{equation*}
and
\begin{equation*}
\gamma_{m+n} = \binom{m}{r} \gamma_{r+n} \prod_{k=1}^{m-r} r_{m,k}.
\end{equation*}
We follow \cite{PS} and use the AM-GM inequality to obtain
\begin{equation*}
\gamma_{m+n} \leq \binom{m}{r} \gamma_{r+n} \left[ \frac{\gamma_{r+n+1}}{(r+1)\gamma_{r+n}} \right]^{m-r},
\end{equation*}
or equivalently
\begin{equation} \label{order}
\frac{\gamma_{m+n}}{(m+n)!} \leq \frac{1}{(m+n)!} \binom{m}{r} \gamma_{r+n} \left[ \frac{\gamma_{r+n+1}}{(r+1)\gamma_{r+n}} \right]^{m-r}.
\end{equation}
Due to the eventually log-concavity of the sequence we have that for $k$ big enough $\frac{\gamma_{k+1}}{(k+1)!} \leq \frac{\gamma_{k}}{k!}$ so we can take a limit $m \to \infty$ in equation \eqref{order} with $n$ and $r$ fixed which shows that $\phi_{\gamma}(x)$ has order at most $1$.  Lastly, we will show that $\sigma \geq 0$ where  
\begin{equation} \label{genus}
\phi_{\gamma}(x) = \sum_{k=r}^{\infty} \frac{\gamma_{k}}{k!} x^k = \frac{\gamma_{r}}{r!} x^r e^{\sigma x} \prod(1+ \rho_k x).
\end{equation}
As above, we can write
\begin{equation*}
J_{\gamma}^{d,0}(x) = \binom{d}{r} \gamma_r x^r \prod_{k=1}^{d-r}(1+\rho_{d,k}x),
\end{equation*}
where the $\rho_{d,k}$ now may not necessarily be real.  From this we find
\begin{equation*}
\sum_{k =1}^{d-r} \frac{\rho_{d,k}}{d} = \frac{d-r}{d} \frac{\gamma_{r+1}}{(r+1)\gamma_{r}} \leq \frac{\gamma_{r+1}}{(r+1)\gamma_r}
\end{equation*}
and because $\lim_{d \to \infty} J_{\gamma}^{d,0} \left(\frac{x}{d}\right) = \phi_{\gamma}(x)$ we obtain
\begin{equation*}
0 \leq \sum_{k=1}^{\infty} \rho_k \leq \frac{\gamma_{r+1}}{(r+1)\gamma_r}.
\end{equation*}
Because $\phi_{\gamma}(x)$ is real any complex $\rho_{d,k}$ or $\rho_k$ show up in complex conjugate pairs so the sums above are real.  From equation \eqref{genus} we obtain $\frac{\gamma_{r+1}}{(r+1)!} = \frac{\gamma_r}{r!} \left( \sigma + \sum_{k} \rho_k \right)$ or equivalently $\sigma = \frac{\gamma_{r+1}}{(r+1)\gamma_r} - \sum_{k} \rho_k \geq 0$.  Note that the exact same proof shows $\sigma = \frac{\gamma_{r+1}}{(r+1) \gamma_r} \geq 0$ when
\begin{equation*}
\phi_{\gamma}(x) = \frac{\gamma_r}{r!} x^r e^{\sigma x} \prod_{k=1}^{\infty} \left( 1 + \rho_k x \right)e^{-\rho_k x}.
\end{equation*}

Finally, we will show that (3) implies (1).  Assume that $\phi_{\gamma}(x) = \sum_{k=0}^{\infty} \frac{\gamma_k}{k!}x^k$ is a real entire function which is the uniform limit of a sequence of polynomials $\{\phi_{m}\}_{m \geq 0}$ with degrees $d_{m}$ such that for each $d \in \NN$, there exists an $N_{3}(d)$ such that $\phi_{n}^{(d_n -d)}(x)$ has all real roots  for any $n \geq N_{3}(d)$.  Let 
\begin{equation*}
\phi_{m}(x) = \gamma_{0,m} + \gamma_{1,m}x + \frac{\gamma_{2,m}}{2!}x^2 + \cdots.
\end{equation*}
Then it is known that $\lim_{m \to \infty} \gamma_{k,m} =\gamma_{k}$ and further $\{\gamma_{k,m}\}_{m \geq 0}$ forms a Cauchy sequence.  Let $m$ be sufficiently large.  Then
\begin{equation*}
\phi_{m}^{(d_m-d)}(x) = \gamma_{d_m-d,m} + \gamma_{d_m -d+1,m}x + \frac{\gamma_{d_m-d+2}}{2!}x^2 + \cdots + \frac{\gamma_{d_m}}{d!}x^d
\end{equation*}
has degree $d$ and all real non-positive roots.  Let $f(x) = \sum_{k=0}^{d} a_{k}x^k$ have all real roots.  Then it is known that
\begin{equation*}
\sum_{k=0}^{d} k! a_{k} \frac{\gamma_{d_m -d+k,m}}{k!} x^k = \sum_{k=0}^{d} a_{k} \gamma_{d_m -d+k, m} x^k
\end{equation*}
also has real roots which shows that $\{\gamma_{d_m -d +k,m}\}_{k=0}^{d}$ is an order $d$ multiplier sequence of type I.  The uniform convergence implies that for $m$ fixed and sufficiently large $\{\gamma_{d_m -m +k, t}\}_{k=0}^{d}$ is also an order $d$ multiplier sequence of type $1$ for all $t \geq m$.  Taking a limit gives the result for $\{\gamma_{d_m -d +k}\}_{k=0}^{d}$.  We also know that $d_m \to \infty$ as $m \to \infty$ so the same argument can be repeated with $m$ shifted to show the later entries in the sequence also form an order $d$ multiplier sequence of type I.  Together, this shows that for each $d \in \NN$ there exists an $N(d)$ such that $\{\gamma_{k+n}\}_{k \geq 0}$ for all $n \geq N(d)$ which completes the proof.

\end{proof}
\begin{proof}[Proof of Theorem \ref{LThm}]
Suppose that $\phi(x) \in \mathcal{SL\text{-}P}$ and let $\{\phi_{k}\}_{k \geq 0}$ be a sequence of polynomials converging to $\phi$ uniformly as in the definition of the shifted Laguerre-P\'{o}lya class such that the derivatives of the polynomials converge uniformly to the same derivative of $\phi$ (for example, one could take the associated Jensen polynomials).  Then for each $d \in \NN$, there exists an $N(2d)$ such that $\phi_{n}^{(d_n -2d)}(x)$ has all real roots for every $n \geq N(2d)$.  Therefore $L_{k}\left( \phi_{n}^{(d_n-2d)}(x) \right) \geq 0$ for any $x \in \R$.    Uniform convergence also implies that there exist $N_{t}$ such that for $n \geq N_{t}$, we have $|\phi_{n}^{(t)}(x) - \phi^{(t)}(x)|<\epsilon$ for any $x \in \R$ and positive $\epsilon$.  Thus if we take $n$ to be larger than the maximum of $N_{d_n -2d +j}$ for $0 \leq j \leq 2d$, then due to uniform convergence, we also have $L_{k} \left( \phi_{m}^{(d_n -d)}(x) \right) \geq 0$ for $m \geq n$ and $0 \leq k \leq d$.  We can then take a limit with respect to $m$ to complete the proof.
\end{proof}

\section{Iterated inequalities}
\subsection{Iterated Tur\'{a}n inequalities}
The language ``higher Tur\'{a}n inequality" has been used historically to describe the inequality 
\begin{equation} \label{T3}
4(\gamma_{i+1}^2 - \gamma_{i}\gamma_{i+2})(\gamma_{i+2}^2 - \gamma_{i+1} \gamma_{i+3}) - (\gamma_{i+1} \gamma_{i+2} - \gamma_{i}\gamma_{i+3})^2 \geq 0.
\end{equation}
However, recently it has been referred to as the third order Tur\'{a}n inequality.  The language has changed for the following reasons.  Let $f(x) = b_d x^{d} + b_{d-1}x^{d-1} + \cdots + b_1 x + b_0$ be a polynomial with real coefficients and let $r_1, \dots, r_d$ denote the roots of $f$.  Let $S_{m} = \sum_{i=1}^{d} r_{i}^{m}$ be the power sums of the roots of $f$ and define the Hankel matrix of $f$ by
\begin{equation*}
M(f) := \begin{pmatrix} S_{0} & S_{1} & S_{2} & \cdots & S_{d-1} \\
S_{1} & S_{2} & S_{3} & \cdots & S_{d} \\ 
S_2 & S_3 & S_4 & \cdots & S_{d+1} \\
\vdots & \vdots & \vdots & \ddots & \vdots \\
S_{d-1} & S_d & S_{d+1} & \cdots & S_{2d-2} \end{pmatrix}.
\end{equation*}
Hermite's theorem states that all of the roots of $f(x)$ are real if and only if $M(f)$ is positive definite.  It is well known that a matrix is positive definite if and only if all of it's principle minors are positive.  This then makes the hyperbolicity of $f(x)$ equivalent to the set of inequalities
\begin{align*}
D_1 &:= S_0 =d >0, \\
D_2 &:= \begin{vmatrix} S_0 & S_1 \\ S_1 & S_2 \end{vmatrix}>0, \\
&\vdots \\
D_{d} &:= \begin{vmatrix} S_{0} & S_{1} & S_{2} & \cdots & S_{d-1} \\
S_{1} & S_{2} & S_{3} & \cdots & S_{d} \\ 
S_2 & S_3 & S_4 & \cdots & S_{d+1} \\
\vdots & \vdots & \vdots & \ddots & \vdots \\
S_{d-1} & S_d & S_{d+1} & \cdots & S_{2d-2} \end{vmatrix}>0.
\end{align*}
One can use the Newton identities to write each $S_m$ in terms of the coefficients of $f(x)$ which leads to the roots of $f$ being real being equivalent to some inequalities on the coefficients of $f$.  For example, $D_2>0$ is equivalent to $(d-1)b_{d-1}^2 - 2db_d b_{d-2}>0$.
Using the argument above, the Jensen polynomial $J_{\gamma}^{d,n}(x)$ having all real roots for $n \geq N$ is equivalent to the sequence $\{\gamma_{k+n} \}_{k\geq 0}$ satisfying $d$ separate inequalities coming from $D_{j}>0$ for $1 \leq j \leq d$ for $n \geq N$.  We call each one of these the order $j$ Tur\'{a}n inequality.  For example, when $j=2$ we recover log-concavity.  We will adopt the following notation.  The order $j$ T\'uran inequality for a given sequence $\{\gamma_{k}\}_{k \geq 0}$ will be denoted by $T_{j}(\gamma_{i})$ or $T_{j}(i)$ when the sequence is clear.  For example, the statement that a sequence $\gamma$ is log-concave can be written as $T_{2}(i) = \gamma_{i}^2 - \gamma_{i-1} \gamma_{i+1} >0$.  For completeness we will define the order $1$ T\'uran inequality as $T_{1}(i) = \gamma_{i} - \gamma_{i-1} >0$ which is just the statement that the sequence is increasing.  We will also use the usual notation to compose the inequalities.  Therefore we have $T_{j}^{(2)}(i) = T_{j}(T_{j}(i))$ and $T_{j}^{(k)}(i) = T_{j}(T_{j}^{(k-1)}(i))$.  For example,
\begin{equation*}
T_{2}^{(2)}(i) = (\gamma_{i}^2 - \gamma_{i-1}\gamma_{i+1})^2 - (\gamma_{i-1}^2 - \gamma_{i-2} \gamma_{i})(\gamma_{i+1}^2 - \gamma_{i} \gamma_{i+2}),
\end{equation*}
\begin{equation*}
T_{1}(T_{2}(i)) = (\gamma_{i}^2 - \gamma_{i-1}\gamma_{i+1}) - (\gamma_{i-1}^2 - \gamma_{i-2}\gamma_{i}),
\end{equation*}
and 
\begin{equation*}
T_{1}^{(k)}(i) = \sum_{m=0}^{k} \binom{k}{m} (-1)^{k-m} \gamma_{i+m}.
\end{equation*}
We also define the operators $\mathcal{T}_{j}^{(k)}(\{\gamma_{i}\}_{i \geq 0}) := \{T_{j}^{(k)}(i)\}_{i \geq 0}$.  We say a sequence $\{\gamma_{i}\}_{i \geq 0}$ is {\bf{$k$-log-concave}} if $T_{2}^{(k)}(i) \geq 0$ for all $i \geq 0$ and say a sequence is {\bf{infinitely-log-concave}} if it is $k$-log-concave for all $k \geq 1$.  Clearly $1$-log-concavity is usual log-concavity.  A multiplier sequence automatically satisfies all of the higher Tur\'{a}n inequalities so it is log-concave, but it is also conjectured that it is infinitely-log-concave.  In \cite{CC} Craven and Csordas show that multiplier sequences are $3$-log-concave and a similar proof should hold for shifted multiplier sequences, but the author has not found much work on iterating higher order Tur\'{a}n inequalities.

The analogous conjecture for shifted multiplier sequences is that for any $k$ the sequence is eventually $k$-log-concave.  We also expect the same to be true for iterations of higher Tur\'{a}n inequalities. 
\begin{Conj}
Assume the notation above and let $\{\gamma_{i}\}_{i \geq 0}$ be a shifted multiplier sequence that eventually is increasing.  Then for each $j, k \in \NN$ there exists an $N_{j}^{(k)}$ such that $T_{j}^{(k)}(n) >0$ for all $n \geq N_{j}^{(k)}$.
\end{Conj}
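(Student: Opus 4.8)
The plan is to combine a localization step, which reduces the inequalities for a shifted multiplier sequence to the corresponding statement for genuine multiplier sequences, with an inductive determinantal engine that controls the iterates. First I would record that for each fixed $j$ the base case $k=1$ is immediate: the quantity $T_j(n)$ is built from the leading Hankel minors $D_1(n),\dots,D_j(n)$ of the Jensen polynomial $J_\gamma^{j,n}(x)$, and by Theorem \ref{SLP} there is an $N_2(j)$ so that $J_\gamma^{j,n}(x)$ has all real roots for $n\ge N_2(j)$. Since the Jensen polynomials approach a Hermite polynomial with simple roots, the associated Hankel matrix is positive definite for large $n$, so Hermite's theorem forces $T_1(n),\dots,T_j(n)$ to be strictly positive; thus $N_j^{(1)}=N_2(j)$ works. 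The localization step is the observation that $T_j^{(k)}(n)$ is a fixed polynomial in a window $\gamma_{n-w},\dots,\gamma_{n+w}$ whose half-width $w=w(j,k)$ depends only on $(j,k)$: each application of $T_j$ reads a bounded window of its input sequence, so composing $k$ times enlarges the window only additively. Choosing $d=2w$ and invoking the equivalence (1)$\Leftrightarrow$(2) of Theorem \ref{SLP}, for $n-w\ge N(2w)$ the window $\{\gamma_{n-w+s}\}_{s=0}^{2w}$ is an order $2w$ multiplier sequence. Thus the conjecture for $n\ge N_j^{(k)}:=N(2w)+w$ would follow from the single algebraic assertion that $T_j^{(k)}\ge 0$ on every multiplier sequence, with strictness supplied by the strict Hankel positivity already present in the tail. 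Transferring positivity from genuine multiplier sequences to these order-$D$ windows is the one point of the localization requiring care; I expect it to follow from the closure lemma for limits of order $d$ multiplier sequences together with an approximation of each window by the coefficient data of real-rooted Jensen polynomials, but this matching should be checked carefully.

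Second, I would attack the genuine case through the Desnanot--Jacobi (Dodgson condensation) identity $D_r(i)^2 = D_r(i-1)D_r(i+1)+D_{r-1}(i)D_{r+1}(i)$ for the Toeplitz minors $D_r(i)=\det M_r(i)$, exactly as sketched above. The goal is twofold: (a) prove that each $D_r$ is nonnegative on a multiplier sequence, extending the Craven--Csordas estimate for the minor attached to $M_3$ to all $r$ by exploiting total positivity of the Taylor coefficients of an $\mathcal{L\text{-}P}I$ function; and (b) show by induction on $k$ that $T_2^{(k)}(i)$ expands as a positive combination of products of the $D_r$ with $0\le r\le k$, the identity being used to clear each square $D_r(i)^2$. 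The base cases $T_2^{(2)}(i)=D_0(i)D_2(i)$ and $T_2^{(3)}(i)=D_1(i)\big(D_0(i-1)D_0(i+1)D_3(i)+D_2(i)^2\big)$ already display the mechanism. For $j>2$ the same programme applies with the Toeplitz minors replaced by the Hankel minors coming from Hermite's theorem, although the combinatorics of that decomposition is not yet understood. An alternative route for the $j=2$ tower is analytic: since the Laguerre operator $L_1$ preserves $\mathcal{L\text{-}P}$, one could try to match the discrete iterate $T_2^{(k)}$ with the $k$-fold application of $L_1$ to $\phi_\gamma^{(n)}$ near the origin, using Theorem \ref{LThm} and Lemma \ref{converge}; the difficulty there is that $\phi_\gamma^{(n)}$ is not known to lie in $\mathcal{L\text{-}P}$ for a general $\mathcal{SL\text{-}P}$ function, so one is again thrown back on the localization step.

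The hard part is step (b): proving in full generality that every iterate admits a positive-determinant decomposition. The expressions grow rapidly and no closed form is known past the low cases recorded above, and indeed even the companion statement for ordinary multiplier sequences --- infinite log-concavity --- is open for $k>3$. A realistic intermediate target, which the localization step makes genuinely new in the shifted setting, is to settle all $j$ with $k\le 3$ by transporting the Craven--Csordas minor bounds through the reduction, and to isolate a two-parameter recursion in $(i,k)$ --- a Desnanot--Jacobi identity one level up --- that would drive the general induction once the signs of the constituent minors are pinned down.
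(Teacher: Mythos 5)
This statement is labelled as a Conjecture in the paper, and the paper offers no proof of it: the only support given is numerical evidence (Table \ref{PartitionTuran}), the remark that Craven and Csordas establish $3$-log-concavity for genuine multiplier sequences, and citations for special cases ($j=1$ for all $k$ by Gupta and Odlyzko, $j=2$ for all $k$ for the partition function by Hou and Zhang, and $k=1$ for small $j$). So there is no proof in the paper to compare yours against, and the relevant question is whether your proposal actually closes the conjecture. It does not, and you say as much yourself: the entire weight of the argument rests on the ``single algebraic assertion that $T_j^{(k)}\ge 0$ on every multiplier sequence,'' and for $j=2$ that assertion is precisely the infinite log-concavity conjecture for multiplier sequences, which the paper explicitly records as open. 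Your step (a) (nonnegativity of all the Toeplitz/Hankel minors $D_r$ on a multiplier sequence) and step (b) (a positive-determinant decomposition of every iterate $T_j^{(k)}$) are both unproved beyond the low cases you list, and for $j>2$ you concede the combinatorics is not understood at all. Reducing an open conjecture to a different open conjecture is a programme, not a proof.

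Two further points on the parts you treat as routine. First, the localization step is shakier than you suggest: Theorem \ref{SLP} gives you that a shifted window is an \emph{order} $2w$ multiplier sequence, but the Craven--Csordas minor bounds and any total-positivity input you invoke in step (a) are proved for genuine (infinite, type I) multiplier sequences, i.e.\ Taylor coefficients of an $\mathcal{L\text{-}P}I$ function; an order $d$ multiplier sequence need not extend to one, so the transfer of positivity to the window is a real gap, not a matter of ``checking the matching carefully.'' Second, your base case $k=1$ is essentially correct and is in fact the content the paper already establishes: real-rootedness of $J_\gamma^{j,n}$ for large $n$ (Theorem \ref{SLP}, or the Hermite-limit results of \cite{GORZ}) plus Hermite's theorem and simplicity of the limiting roots gives strict positivity of $T_1(n),\dots,T_j(n)$. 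That part is not new, and everything beyond it remains open. It is worth noting that your Dodgson-condensation strategy, including the identity $D_r(i)^2=D_r(i-1)D_r(i+1)+D_{r-1}(i)D_{r+1}(i)$ and the sample decompositions of $T_2^{(2)}$ and $T_2^{(3)}$, coincides with a strategy the author sketched and then suppressed from the published text, presumably for exactly the reasons above.
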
 

We provide evidence for this conjecture with the sequence given by the integer partition numbers $p(n)$.  In Table \ref{PartitionTuran} below we give the minimum conjectured value for $N_{j}^{(k)}$ which we speculate may have size approximately $\frac{6}{\pi^2} (jk)^2 \log(jk)^2$.
\begin{center}
\begin{table}[H]
\begin{tabular}{ || c | c | c | c | c ||}
\hline
 $j \backslash k$ & 1 & 2 & 3 & 4   \\ 
 \hline
 $1$ & 2 & 8 & 26 & 68  \\
 \hline
 $2$ & 26 & 222 & 640 & 1292   \\
 \hline   
 $3$ & 94 & 522 & 1232 & 2094  \\
 \hline
 $4$ & 206 & 991 & 2040 & 3005 \\
 \hline
\end{tabular}
 \caption{Minimum conjectured $N_{j}^{(k)}$ such that $T_{j}^{(k)}(p(n)) > 0$ for all $n \geq N_{j}^{(k)}$.}
\label{PartitionTuran}
\end{table}
\end{center}

We note that the values of $N_{j}^{(1)}$ in Table \ref{PartitionTuran} (and $N_{5}^{(1)} =381$) have been proved over several papers \cite{N, DP, LW}.  Further, in \cite{G} Gupta showed $N_{1}^{(k)}$ exists for all $k$ and computed $N_{1}^{(k)}$ explicitly for $k \leq 10$ and Odlyzko \cite{O} showed that $N_{1}^{(k)} \sim \frac{6}{\pi^2} k^{2} \log(k)^2$.   Hou and Zhang have proved that $N_{2}^{(k)}$ exists for the partition function for every $k$ in \cite{HZ} and confirmed the table for $k \leq 3$.  We in fact believe much more might be true than stated above.  Any composition of the Tur\'{a}n inequalities seems to eventually hold.  This leads to the following ambitious conjecture.
\begin{Conj} \label{TC}
Assume the notation above and let $\{\gamma_{i}\}_{i \geq 0}$ be a shifted multiplier sequence that eventually is increasing.  Then for each $j \in \NN$, $\mathcal{T}_{j}(\{\gamma_{i}\}_{i \geq 0})$ is again a shifted multiplier sequence that is eventually increasing.
\end{Conj}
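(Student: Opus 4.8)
The plan is to pass through Theorem \ref{SLP} and then argue asymptotically, exploiting the Hermite limit of \cite{GORZ}. By Theorem \ref{SLP}, the assertion that $\mathcal{T}_{j}(\{\gamma_{i}\}_{i \geq 0}) = \{T_{j}(i)\}_{i \geq 0}$ is again a shifted multiplier sequence of type I is equivalent to the statement that $\Phi(x) := \sum_{i \geq 0} \frac{T_{j}(i)}{i!}\, x^{i}$ lies in $\mathcal{SL\text{-}P}I$; equivalently, that for every degree $d$ the Jensen polynomials $J_{T_{j}}^{d,n}(x)$ are hyperbolic with roots of one sign for all large $n$. It is convenient to recall the determinantal meaning of $T_{j}$: up to a positive normalizing factor it is the $j$-th leading principal minor $D_{j}$ of the Hankel matrix of power sums of the roots of the Jensen polynomial, and as a polynomial in the $\gamma$'s it is homogeneous of degree $2(j-1)$ (for $j = 2$ it is log-concavity $\gamma_{i}^{2} - \gamma_{i-1}\gamma_{i+1}$, and for $j = 3$ it is \eqref{T3}). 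The problem thus becomes: show that $\mathcal{SL\text{-}P}I$, equivalently the class of admissible asymptotic sequences isolated in \cite{GORZ}, is closed under the degree-$j$ Tur\'{a}n operator.

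The engine will be the asymptotic Hermite phenomenon. Writing $\gamma_{n} = e^{f(n)}$ with $f$ the smooth, eventually concave profile that governs the sequence, I would expand each entry $\gamma_{n+a}$ of the Hankel determinant defining $T_{j}(n)$ to several orders in the slow derivatives $f'(n), f''(n), \dots$, factor out the common exponential, and identify the surviving determinant as a confluent Vandermonde- or Wronskian-type object. The expected conclusion is
\begin{equation*}
\log T_{j}(n) = 2(j-1)\, f(n) + R_{j}(n),
\end{equation*}
where the remainder $R_{j}(n)$ is of lower order and is built from $f''(n), f'''(n), \dots$ (for $j = 2$ one has $R_{2}(n) \sim \log(-f''(n))$, and the eventual concavity of $f$ gives positivity). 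Thus the leading profile of $\{T_{j}(n)\}$ is the rescaling $2(j-1) f$ of $f$, modulated by lower-order corrections; since the renormalized Jensen polynomials of a sequence depend only on this top asymptotic data and its slow derivatives, a sequence with such a profile again satisfies the hypotheses of \cite{GORZ}. Its renormalized Jensen polynomials then converge to the Hermite polynomials $H_{d}(x)$, which have simple, well-separated real roots, so $J_{T_{j}}^{d,n}(x)$ is hyperbolic for $n$ large; Theorem \ref{SLP} returns the conclusion, and eventual positivity and monotonicity of $\{T_{j}(i)\}$ fall out of the $e^{2(j-1)f(n)}$ leading term.

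To organize the bookkeeping I would lean on the Desnanot--Jacobi (Dodgson condensation) identity $D_{j}(i)^{2} = D_{j}(i-1)D_{j}(i+1) + D_{j-1}(i)D_{j+1}(i)$ for Hankel determinants: it supports an induction on $j$ and expresses the Tur\'{a}n data of $\{T_{j}(i)\}$ as positive combinations of products of the $D_{m}(i)$, which simultaneously controls positivity and pins down the shape of the lower-order terms $R_{j}$. The same identity is the natural route to the iterated expressions $T_{j}^{(k)}$, and hence to the weaker preceding conjecture.

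The hard part will be the second step: verifying that the Tur\'{a}n operator preserves the precise decay-rate hypotheses of \cite{GORZ}. The difficulty is structural rather than cosmetic, since $T_{j}$ is a determinant whose leading exponential contributions cancel, so one must carry the expansion of $\gamma_{n+a}$ deep enough that, after the cancellation, the surviving errors are shown to flatten at exactly the rate the Hermite limit demands; what is really missing is a clean ``closure of the admissibility class under Hankel determinants'' lemma. I stress that this is genuinely delicate: the fixed-shift specialization of the $j = 2$ case is the assertion that multiplier sequences are infinitely log-concave, which is still open, so a complete proof of the full conjecture would subsume it. The shifted regime is more tractable only because the limiting Hermite polynomials are robustly hyperbolic, so bounded perturbations of the coefficients cannot create non-real roots once $n$ is large.
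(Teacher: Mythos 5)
The statement you are addressing is Conjecture \ref{TC}; the paper offers no proof of it, so there is no argument of the author's to compare yours against --- it is presented as an open problem, supported only by numerical evidence (Table \ref{PartitionTuran}) and by partial results in special cases for the partition function \cite{G, O, HZ}. Your submission is accordingly a strategy sketch rather than a proof, and you flag the principal missing ingredient yourself (a ``closure of the admissibility class under Hankel determinants'' lemma).

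Beyond that admitted incompleteness there is one genuine logical gap, and it sits at the very first step. You assert that membership in $\mathcal{SL\text{-}P}I$ is ``equivalently the class of admissible asymptotic sequences isolated in \cite{GORZ}.'' It is not. The hypotheses of \cite{GORZ} --- a smooth profile $\gamma_{n} = e^{f(n)}$ with controlled derivatives and suitable renormalizing sequences --- are a \emph{sufficient} condition for the renormalized Jensen polynomials to converge to Hermite polynomials, hence for membership in $\mathcal{SL\text{-}P}$; they are not necessary, and Conjecture \ref{TC} is stated for an arbitrary shifted multiplier sequence that is eventually increasing, which need not carry any smooth profile $f$ at all. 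So even before reaching the hard analytic step you identify (showing that the determinantal cancellation in $T_{j}$ leaves a remainder flattening at the rate the Hermite limit demands), your reduction has silently strengthened the hypothesis: a successful execution would prove the conjecture only for GORZ-admissible sequences, not as stated. Two smaller cautions: (i) eventual positivity of $\{T_{j}(i)\}$ does follow from the shifted multiplier property via Hermite's theorem on the Hankel minors, so that part of your setup is sound, but Theorem \ref{SLP} then only converts the \emph{conclusion} into a statement about $\Phi$, it does not supply the asymptotic input you need; (ii) the condensation identity $D_{j}(i)^{2} = D_{j}(i-1)D_{j}(i+1) + D_{j-1}(i)D_{j+1}(i)$ must be verified for the specific minors in play, including the sign of the second term --- Desnanot--Jacobi yields a relation of this shape only when the four corner minors and the central minor of one matrix are again members of the same indexed family, and the bookkeeping of shifts and signs is precisely where such inductions tend to fail.
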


\subsection{Iterated Laguerre inequalities}
In \cite{CC} Craven and Csordas also investigate iterating the classical Laguerre inequality $L_{1}(\phi(x))$ for $\phi(x) \in \mathcal{L\text{-}P}$ with non-negative Taylor coefficients.  In particular, they show that for such a function $L_{1}^{(2)}(\phi^{(k)}(x)) \geq 0$ for all $k \geq 2$ and $x \in \R$, where as above $L_{j}^{(2)}(\phi(x)) = L_{j}(L_{j}(\phi(x)))$ and $L_{j}^{(k)}(\phi(x)) = L_{j}(L_{j}^{(k-1)}(\phi(x)))$.   We make the following conjecture analogous to Conjecture \ref{TC}.
\begin{Conj}
Let $\phi(x) \in \mathcal{SL\text{-}P}^{+}$.  Then for any $j \in \NN$, $L_{j}(\phi(x)) \in \mathcal{SL\text{-}P}^{+}$.
\end{Conj}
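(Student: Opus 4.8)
The plan is to use Theorem \ref{SLP} to convert the membership $L_{j}(\phi)\in\mathcal{SL\text{-}P}^{+}$ into a statement about the Taylor coefficients of $L_{j}(\phi)$. Write $\phi(x)=\sum_{k\geq 0}\frac{\gamma_{k}}{k!}x^{k}$ and $L_{j}(\phi)(x)=\sum_{m\geq 0}\frac{\delta_{m}}{m!}x^{m}$. One first notes that $L_{j}(\phi)$ is a real entire function of order at most $1$: since $\phi\in\mathcal{SL\text{-}P}I$ it has order at most $1$, and by \eqref{L2} the function $L_{j}(\phi)$ is a finite real-linear combination of products $\phi^{(a)}(x)\phi^{(b)}(x)$ with $a+b=2j$, each factor of order at most $1$, so both the products and their sum have order at most $1$. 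Granting eventual non-negativity of $\{\delta_{m}\}$ (addressed below), Theorem \ref{SLP} reduces the conjecture to showing that $\{\delta_{m}\}_{m\geq 0}$ is a shifted multiplier sequence of type I; equivalently, that for each fixed $d$ the Jensen polynomials $J_{\delta}^{d,n}(x)$ have all real roots for all sufficiently large $n$, which by Lemma \ref{converge} amounts to the high derivatives $L_{j}(\phi)^{(n)}$ being uniform limits of real-rooted polynomials.

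By the Leibniz rule, differentiating \eqref{L2} gives
\begin{equation*}
L_{j}(\phi)^{(n)}(x)=\sum_{i=0}^{2j}\sum_{p=0}^{n}\frac{(-1)^{i+j}}{(2j)!}\binom{2j}{i}\binom{n}{p}\,\phi^{(i+p)}(x)\,\phi^{(2j-i+n-p)}(x),
\end{equation*}
so each coefficient $\delta_{m}=L_{j}(\phi)^{(m)}(0)$ is an alternating sum of products $\gamma_{a}\gamma_{b}$ with $a+b=m+2j$, the dominant terms having $a,b\approx (m+2j)/2\to\infty$. I would set up an approximation exactly as in the proof of Theorem \ref{LThm}: approximate $\phi$ by polynomials $\phi_{D}$ (for instance the rescaled Jensen polynomials $J_{\gamma}^{D,0}(x/D)$) with $\phi_{D}^{(i)}\to\phi^{(i)}$ uniformly on compacta for every $i$, so that $L_{j}(\phi_{D})\to L_{j}(\phi)$ together with all of its derivatives, and then try to propagate real-rootedness of the high derivatives of $\phi_{D}$ through $L_{j}$ to the limit. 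Real-rootedness of the $J_{\delta}^{d,n}$ together with the non-negativity of the $\delta_{m}$ would force the roots to be non-positive, giving type I and hence $L_{j}(\phi)\in\mathcal{SL\text{-}P}^{+}$.

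The natural mechanism for the real-rootedness is the asymptotic behaviour of the high derivatives $\phi^{(a)}$. Because $\{\gamma_{k}\}$ is a shifted multiplier sequence, in the regime studied by Griffin, Ono, Rolen, and Zagier \cite{GORZ} the renormalized Jensen polynomials $\widetilde{J}_{\gamma}^{d,n}(x)$ converge to the Hermite polynomial $H_{d}(x)$ as $n\to\infty$. The hope is that, because the indices satisfy $a+b=m+2j$ with $a,b\to\infty$, the quadratic-in-$\gamma$ coefficients $\delta_{m}$ inherit the same growth profile, so that the renormalized Jensen polynomials $\widetilde{J}_{\delta}^{d,n}(x)$ also converge to $H_{d}(x)$ up to a positive normalisation; this would yield real-rootedness of $J_{\delta}^{d,n}$ for all large $n$ and simultaneously the eventual positivity of the $\delta_{m}$. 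A complementary route is to first establish the converse of Theorem \ref{LThm} (the Conjecture following it) and then prove the iterated inequalities $L_{k}(L_{j}(\phi)^{(n)})\geq 0$ for every $k$ and all large $n$ directly, generalising the argument of Craven and Csordas \cite{CC} for $L_{1}^{(2)}(\phi^{(k)})$ on $\mathcal{L\text{-}P}^{+}$ to the composite operators $L_{k}\circ L_{j}$ and to the shifted setting.

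The hard part will be controlling the cancellation in the alternating sum above. In Theorem \ref{LThm} one needed only the \emph{non-negativity} $L_{k}(p)\geq 0$, which holds for every real-rooted $p$; here one needs the much stronger conclusion that $L_{j}(\phi)$ itself lies in the class, and $L_{j}$ does \emph{not} preserve $\mathcal{L\text{-}P}I$ at finite level---for instance $L_{1}\!\left((x+1)(x+2)\right)=2x^{2}+6x+5$ has non-real roots even though $(x+1)(x+2)$ has two negative roots. Any valid proof must therefore be genuinely asymptotic, exploiting the fact that the derivatives $\phi^{(a)}$ occurring in $L_{j}(\phi)^{(n)}$ have order tending to infinity, where the multiplier-sequence structure of $\{\gamma_{k}\}$ forces $\phi^{(a)}$ to behave more and more like a member of $\mathcal{L\text{-}P}$. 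Making the convergence $\widetilde{J}_{\delta}^{d,n}\to H_{d}$ precise and uniform enough to survive the signed summation---rather than merely extracting the single-coefficient inequalities of Theorem \ref{PL}---is the central obstacle, and is presumably the reason the statement is recorded only as a conjecture.
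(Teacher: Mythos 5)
This statement is recorded in the paper only as a conjecture; no proof is given there, and your proposal does not close that gap either --- it is a roadmap whose essential step is missing. Concretely: after reducing via Theorem \ref{SLP} to showing that the coefficients $\delta_{m}=L_{j}(\phi)^{(m)}(0)$ form a shifted multiplier sequence of type I, you need the Jensen polynomials $J_{\delta}^{d,n}$ to be real-rooted for all large $n$, and at no point do you supply a mechanism that actually produces this. Your own example $L_{1}\bigl((x+1)(x+2)\bigr)=2x^{2}+6x+5$ shows the naive mechanism (push real-rootedness of the approximating polynomials through $L_{j}$ and pass to the limit, as in the proof of Theorem \ref{LThm}) is dead on arrival, since $L_{j}$ does not preserve real-rootedness at finite level; the fallback you offer --- that $\widetilde{J}_{\delta}^{d,n}\to H_{d}$ because $a,b\to\infty$ in the products $\gamma_{a}\gamma_{b}$ --- is explicitly labelled a ``hope'' and is not available in this generality: the Hermite-convergence theorem of \cite{GORZ} requires specific asymptotic hypotheses on the sequence (an asymptotic expansion with controlled error), which mere membership of $\phi$ in $\mathcal{SL\text{-}P}^{+}$ does not provide. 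Cancellation in the alternating sum defining $\delta_{m}$ is exactly where the difficulty lives, and nothing in the proposal controls it.

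Two smaller points. First, you ``grant'' the eventual non-negativity of $\{\delta_{m}\}$ and suggest it is addressed later, but it never is; note that it does not follow from Theorem \ref{LThm}, because $L_{j}$ does not commute with differentiation --- $\bigl(L_{j}(\phi)\bigr)^{(m)}(0)$ is not $L_{j}\bigl(\phi^{(m)}\bigr)(0)$ (already for $j=1$: $\bigl(L_{1}(\phi)\bigr)'=\phi'\phi''-\phi\phi'''$ while $L_{1}(\phi')=(\phi'')^{2}-\phi'\phi'''$), so the inequalities of Theorem \ref{PL} do not directly give positivity of the $\delta_{m}$. Second, the suggestion to route through the converse of Theorem \ref{LThm} assumes another open conjecture of the paper, so it cannot serve as a proof step. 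Your diagnosis of why the statement is hard is accurate and matches the paper's decision to leave it as a conjecture, but as a proof the proposal has a genuine, unfilled hole at its centre.
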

This conjecture seems to have been investigated less than the Tur\'{a}n case for the partition function.  For example, at $x=0$ we have $L_{1}(\phi_{p}^{(n)}(0)) = p(n+1)^2 - p(n)p(n+2) \geq 0$ which is log-concavity, but the inequalities
\begin{equation*}
L_{k}(\phi_{p}^{(n)}(0)) = \sum_{j=0}^{2k} \frac{(-1)^{k+j}}{(2k)!} \binom{2k}{j} p(n+j) p(n+2k-j) \geq 0, \quad n \geq N_k
\end{equation*}
proved in Theorem \ref{PL} did not seem to be known for $k>1$ before this work.  Below in Table \ref{PartitionLaguerre}, we give the minimal conjectured $N_{j}^{(1)}$ such that $L_{j}(\phi_{p}^{(n)}(0)) \geq 0$ for all $n \geq N_{j}^{(1)}$.

\begin{center}
\begin{table}[H]
 \begin{tabular}{||c|c|c|c|c|c|c|c|c|c|c||} 
 \hline
  $j$  & 1 & 2 & 3 & 4 & 5 & 6 & 7 & 8 & 9 & 10 \\
 \hline
 $N_{j}^{(1)}$ & 25 & 184 & 531 & 1102 & 1923 & 3014 & 4391 & 6070 & 8063 & 10382 \\ 
 [1ex] 
 \hline
\end{tabular}
 \caption{Minimum conjectured $N_{j}^{(1)}$ such that $L_{j}(\phi_{p}^{(n)}(0)) \geq 0$ for all $n \geq N_{j}^{(1)}$.}
\label{PartitionLaguerre}
\end{table}
\end{center} 
\begin{rmk}
Shortly after the preparation of this paper the author was notified that the value of $N_{2}^{(1)}=184$ was confirmed by Wang and Yang in \cite{WY}.
\end{rmk}
Note that at even though $L_{1}(\phi_{\gamma}(0)) = T_{2}(\gamma_{i+1})$, their iterations are not equal.  For example
\begin{equation*}
T_{2}^{(2)}(\gamma_{i+1}) =(\gamma_{i+1}^2 - \gamma_{i} \gamma_{i+2})^2 - (\gamma_{i}^2 - \gamma_{i-1} \gamma_{i+1})(\gamma_{i+2}^2 -\gamma_{i+1}\gamma_{i+3})
\end{equation*}
while
\begin{equation*}
L_{1}^{(2)}(\phi_{\gamma}(0)) = (\gamma_{i+1}\gamma_{i+2} - \gamma_{i}\gamma_{i+3})^2 -(\gamma_{i+1}^2 -\gamma_{i}\gamma_{i+2})(\gamma_{i+2}^2 - \gamma_{i}\gamma_{i+4}).
\end{equation*}
One can see that $L_{1}^{(2)}, T_{2}^{(2)}$, and $T_{3}$ in equation \eqref{T3} all have the same degree, but are not equal.

\end{document}